\newtheorem{theorem}{Theorem}
\newtheorem{corollary}[theorem]{Corollary}
\newtheorem{definition}[theorem]{Definition}
\newtheorem{lemma}[theorem]{Lemma}
\newtheorem{remark}[theorem]{Remark}
\newenvironment{proof}[1][Proof]{\textbf{#1.} }{\ \rule{0.5em}{0.5em} \bigskip}
\renewenvironment{abstract}{\begin{center}
\begin{minipage}[c]{12cm} {\bf Abstract:}} {\end{minipage}
\end{center}}
\begin{document}

\title{The second Noether theorem on time scales}

\author{
Agnieszka B. Malinowska \footnote{Faculty of Computer Science,
Bialystok University of Technology,
15-351 Bia\l ystok, Poland, a.malinowska@pb.edu.pl} \ \& Nat\'{a}lia Martins \footnote{CIDMA - Center for Research and Development in Mathematics and Applications,
Department of Mathematics,
University of Aveiro,
3810-193 Aveiro, Portugal, natalia@ua.pt}}
\date{}

\maketitle


\begin{abstract}
We extend the second Noether theorem to variational problems on time scales. Our result provides as corollaries the
classical second Noether theorem, the second Noether theorem for the $h$-calculus and the second Noether theorem
for the $q$-calculus.

\bigskip

\noindent \textbf{Keywords:} time scales calculus, calculus of variations,
gauge symmetries, Euler--Lagrange equations, Noether's second theorem.

\bigskip

\noindent \textbf{Mathematics Subject Classification 2010:}
34N05, 37K05,  39A12, 49K15.

\end{abstract}


\section{Introduction}

In 1915, general relativity was almost a finished theory, but still there was a problem regarding the conservation of energy.
David Hilbert asked for help in solving this problem the German mathematician Emmy Noether.
She solved the problem proving two remarkable theorems that relate the invariance of a variational integral with
properties of its Euler--Lagrange equations. These results were published  in 1918 in the paper \emph{Invariante Variationsprobleme} \cite{Noether1918}.
Noether was described by many important scientists, such as Pavel Alexandrov, Albert Einstein, Jean Dieudonné and David Hilbert, as the most important woman in the history of mathematics.
In order to get a good exposition of the history of Emmy Noether and her important contributions to fundamental physics and mathematics, we refer the reader
to the recent book \cite{Kosmann-Schwarzbach2010}.
This book explains very clearly that it took too much time before mathematicians and physicists began to recognize the
importance of Noether's theorems: until 1950 Noether's theorems were poorly understood and Noether's name disappeared almost entirely.

The first theorem in \cite{Noether1918}, usually known as Noether's theorem,  guarantees that the
invariance of a variational
integral with respect to a continuous symmetry transformations that depend on $\rho$ parameters implies the existence of $\rho$ conserved quantities along the
Euler--Lagrange extremals. Such transformations are global transformations.
Noether's theorem explains all conservation laws of mechanics:  conservation of energy comes
from invariance of the system under time translations;
conservation of linear momentum comes from invariance of the system under spacial translations; conservation of angular
momentum reflects invariance with respect to spatial rotations.

The first Noether theorem is nowadays a well-known tool in a modern theoretical physics, engineering and the calculus of variations \cite{Atanackovic}, \cite{Zbig+Torres 2008}, \cite{Gastão+Torres 2007}, \cite{FredericoTorres2012}, \cite{MalinowskaNother2012}, \cite{bookMaliTorres2012}, \cite{MartinsTorres2010}.
Inexplicably, it is still not well-known that the famous paper of
Emmy Noether includes another important result, the second Noether theorem,
which applies to variational
problems that are invariant under a certain group of transformations, a so-called infinite continuous group,
which depends on arbitrary functions and their derivatives (see also \cite{Logan1974}).
Such transformations are local transformations because can affect every part of the system differently.
Noether's second theorem states that if a variational integral has an infinite-dimensional Lie algebra of
infinitesimal symmetries parameterized linearly by $r$ arbitrary functions and their derivatives up to a given order $m$,
then there are $r$ identities between Euler--Lagrange expressions and their derivatives up to order $m$. These identities Noether called ``dependencies''. For example, the Bianchi identities, in the general theory of relativity, are examples of such ``dependencies''.
Noether's second theorem has applications in general relativity, electrodynamics, hydromechanics,
quantum chromodynamics and other gauge field theories. Motivated by the important
applications of the second Noether theorem, our goal in this paper is to generalize this result
proving that the second Noether Theorem is valid
for an infinite number of time scales.
As we will see, in the particular case where the time scale $\mathbb{T}$ is $\mathbb{R}$, we get from our result the classical second
Noether theorem; when $\mathbb{T}=\mathbb{Z}$ we obtain the analogue of the second Noether theorem for the difference calculus of variations; when $\mathbb{T}=q^{\mathbb{N}_0}$ (for some $q>1$) we obtain a new result: the second Noether theorem for the $q$-calculus (quantum calculus). For more on the theory of quantum calculus and quantum calculus of variation we refer to \cite{Aldwoah+Mali+Torres2012}, \cite{Bangerezako 2004}, \cite{Baoguo et all 2009}, \cite{Brito+Martins+Torres2012}, \cite{Brito+Martins+Torres q-symmetric}, \cite{Brito+Martins+Torres Hahn-symmetric}, \cite{Cresson 2009}, \cite{Mali+Martins 2013}, \cite{MaliTorresHahn2010}, \cite{MartinsTorres Infinite 2012}, \cite{Book Kac 2002}.

The theory of time scales was introduced in 1988 by Stefan Hilger in his Ph.D thesis \cite{Hilger} as a means of unifying
theories of differential calculus and difference calculus into a single theory. With a short time this unification aspect has been supplemented by the extension and generalization features. The time scale calculus allows to consider more complex time domains, such as $h\mathbb{Z}$, $\mathbb{T}=q^{\mathbb{N}_0}$ or hybrid domains.
The study of the calculus of variations in the context of time scales
had its beginning only in 2004 with the paper \cite{Bohner:2004} of
Martin Bohner (see also \cite{zeidan:2004}). Since then, the variational calculus on time scales
advanced fairly quickly, as can be verified with the large number of published papers on the subject
\cite{Almeida and Torres(2009)},  \cite{RuiTorres2010}, \cite{Rui+Delfim2008},
 \cite{FerreiraTorres2010}, \cite{EwaMaliTorres2010},  \cite{MMT-2010}, \cite{MaliTorresAMC2010},
\cite{comBasia:duality}, \cite{Martins+Torres-2009},
\cite{NataliaTorresGeneralizing-2011}, \cite{MartinsTorres2012}.
Noether's first theorem has been
extended to the variational calculus on time scales using several approaches \cite{Zbig+Torres 2008}, \cite{Zbig+Nat+Torres2011}, \cite{MartinsTorres2010}, while the second Noether theorem on times scales is still not available in the literature. So there is evidently a need for a time scale analogue of Noether's second theorem.

The paper is organized as follows. In Section~\ref{Preliminary results} we
review some preliminaries about single-variable variational calculus on time scales, for example we recall the Euler--Lagrange equation for
a delta variational problem. Our main results are stated in Section~\ref{Main results}. Namely, in Subsection~\ref{Second Noether's Theorem - single integral case} we prove
Noether's second theorem for variational problems involving a single delta integral (with and without transformation of time) and in Subsection~\ref{Second Noether's Theorem - multiple integral case} we prove
Noether's second theorem for variational problems involving multiple delta integrals (without transformation of time). Section~\ref{Example}
provides a concrete example of application of our results. Finally, in Section~\ref{Concluding Remarks}
we present some concluding remarks.


\section{Preliminaries}
\label{Preliminary results}

In this paper we assume the reader to be familiar with the calculus on time scales.
For a good introduction to the theory of time scales we refer
to the well-known books in this field \cite{Bohner-Peterson1,Bohner-Peterson2}.
The first developments on time scale calculus was done essentially
using the delta-calculus. However, for some applications, in particular to solve problems of
the calculus of variations and control theory in economics, is often more convenient to
work backwards in time, that is, using the nabla-calculus.
In this paper we are concerned with the delta-calculus. It is clear that all the arguments used in the
proofs of our results can be modified to work for the nabla-calculus.

In what follows we review some preliminaries about the
variational calculus on time scales needed in this paper.

Let $\mathbb{T}$ be a given time
scale, $n \in \mathbb{N}$,
and $L: \mathbb{T}\times \mathbb{R}^n \times \mathbb{R}^n
\rightarrow \mathbb{R}$ be continuous, together with its partial delta derivatives of first and second order with respect to $t$
and partial usual derivatives of the first and second order with respect to the other variables.
Suppose that $a,b\in \mathbb{T}$ and $a<b$.
We consider the following optimization problem on $\mathbb{T}$:
\begin{equation}
\label{problem}
\mathcal{L}[y]=\int_a^b L(t,y^\sigma(t),y^\Delta(t)) \Delta t
\longrightarrow {\rm extremize},
\end{equation}
where the set of admissible functions are
$$
\mathcal{D}=\{ y\ | \ y: [a,b]\cap \mathbb{T}
\rightarrow \mathbb{R}^n,\ y\in \mathrm{C}^1_{rd}([a,b]\cap \mathbb{T},\mathbb{R}^n) ,\
y(a)=\alpha,\ y(b)=\beta\}
$$
for some $\alpha, \beta \in \mathbb{R}^n$, and
where $\sigma$ is the forward jump operator,  $y^\Delta$
is the delta-derivative of $y$, and, for $i\in \mathbb{N}$,
$$
\mathrm{C}^i_{rd}([a,b]\cap \mathbb{T},\mathbb{R}^n):= \{ y\ | \ y: [a,b]\cap \mathbb{T}
\rightarrow \mathbb{R}^n,\ y^{\Delta^i} \mbox{ is rd-continuous on } [a,b]^{\kappa^i}\}.
$$

As usual, $y^\sigma(t)$ denotes  $y(\sigma(t))$ and $[a,b]^{\kappa^i}:=[a,\rho^i(b)]$, where $\rho$ is the backward jump operator.
By extremize we mean maximize or minimize.


In what follows all intervals are time scales intervals, that is, we simply write $[a,b]$  to denote the set
$[a,b]\cap \mathbb{T}$.
Let $y:=(y_1, \ldots, y_n)$ and denote by $\frac{\partial L}{\partial y_k}$
the partial derivative of $L$ with respect to $y_k$.

\begin{definition}
We say that $y_{\ast}\in C_{rd}^{1}([a,b], \mathbb{R}^n)$
is a local minimizer (resp. local maximizer) for problem  (\ref{problem})
if there exists $\delta > 0$ such that
$$
\mathcal{L}[y_{\ast}]\leq \mathcal{L}[y] \quad ( {\rm resp.}  \ \mathcal{L}[y_{\ast}]\geq \mathcal{L}[y])
$$
for all $y \in C_{rd}^{1}([a,b], \mathbb{R}^n)$
satisfying the boundary conditions
$y(a)=\alpha$, $y(b)=\beta$, and
$$
\parallel y - y_{\ast}\parallel :=
\sup_{t \in [a,b]^\kappa}\mid y^{\sigma}(t)-y_{\ast}^{\sigma}(t)\mid
+ \sup_{t \in [a,b]^\kappa}\mid y^{\Delta}(t)-y_{\ast}^{\Delta}(t)\mid < \delta \, ,
$$
where $|\cdot|$ denotes a norm in $\mathbb{R}^n$.
\end{definition}

\begin{definition}
We say that $\eta\in C^{1}_{rd}\left([a,b],\mathbb{R}^n\right)  $ is an admissible variation for problem (\ref{problem}) provided
$\eta\left(  a\right)  =\eta\left(  b\right)  =0.$
\end{definition}

\begin{definition}
A function $f:[a,b]\times \mathbb{R} \rightarrow \mathbb{R}$ is called continuous in the second variable,
uniformly in the first variable, if for each $\epsilon >0$, there exists $\delta>0$ such that $|x_1-x_2| < \delta$ implies
$|f(t,x_1)- f(t,x_2)| < \epsilon$ for all $t \in [a,b]$.
\end{definition}

\begin{lemma} [\cite{Bohner:2004}] \label{derivation under integral sign} Suppose that $\eta:=(\eta_1, \cdots, \eta_n)$ is an admissible variation for problem (\ref{problem}) and $y:=(y_1, \cdots, y_n) \in \mathcal{D}$.
Let $\phi:\mathbb{R}\rightarrow \mathbb{R}$ and $f:[a,b]\times \mathbb{R} \rightarrow \mathbb{R}$  be defined, respectively, by
$\phi(\epsilon):=\mathcal{L}[y+\epsilon \eta]$ and
$f(t,\epsilon):=L(t,y^\sigma(t) + \epsilon \eta^\sigma(t),y^\Delta(t) + \epsilon \eta^\Delta (t))$.
If $\frac{\partial f}{\partial \epsilon}$ is continuous in $\epsilon$, uniformly in $t$, then
$$
\dot{\phi}(0)= \displaystyle \int_a^b \sum_{k=1}^{n}\left(\frac{\partial L}{\partial y^\sigma_k} (t,y^\sigma(t),y^\Delta(t))\eta_k^\sigma(t) + \frac{\partial L}{\partial y^\Delta_k} (t,y^\sigma(t),y^\Delta(t))\eta_k^\Delta(t)\right) \Delta t.
$$
\end{lemma}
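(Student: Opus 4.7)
The plan is to identify $\dot{\phi}(0)$ with the delta integral of $\partial f/\partial\epsilon$ evaluated at $\epsilon=0$, which gives exactly the stated formula once the chain rule is applied to $f$.

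First, I would observe that, by definition,
$$
\phi(\epsilon) = \mathcal{L}[y+\epsilon\eta] = \int_a^b f(t,\epsilon)\,\Delta t,
$$
so the lemma is a statement about differentiating a delta integral in a parameter. Since the integrand $f$ depends on $\epsilon$ only through the last $2n$ (classical) arguments of $L$, and $L$ has continuous usual partial derivatives with respect to those arguments, the ordinary chain rule yields
$$
\frac{\partial f}{\partial \epsilon}(t,\epsilon)
=\sum_{k=1}^{n}\Bigl(\tfrac{\partial L}{\partial y_k^{\sigma}}\bigl(t,y^\sigma+\epsilon\eta^\sigma,y^\Delta+\epsilon\eta^\Delta\bigr)\,\eta_k^\sigma(t)
+\tfrac{\partial L}{\partial y_k^{\Delta}}\bigl(t,y^\sigma+\epsilon\eta^\sigma,y^\Delta+\epsilon\eta^\Delta\bigr)\,\eta_k^\Delta(t)\Bigr).
$$
Setting $\epsilon=0$ then produces precisely the integrand displayed in the conclusion.

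The main work is to justify passing the derivative under the delta integral, namely
$$
\dot{\phi}(0)=\lim_{\epsilon\to 0}\frac{\phi(\epsilon)-\phi(0)}{\epsilon}
=\int_a^b\frac{\partial f}{\partial \epsilon}(t,0)\,\Delta t.
$$
For this I would apply the mean value theorem in the variable $\epsilon$ to write
$\bigl(f(t,\epsilon)-f(t,0)\bigr)/\epsilon=\partial f/\partial\epsilon(t,\xi_{t,\epsilon})$ for some $\xi_{t,\epsilon}$ between $0$ and $\epsilon$, and then invoke the hypothesis that $\partial f/\partial\epsilon$ is continuous in $\epsilon$ uniformly in $t$ to conclude that the difference quotient converges uniformly in $t\in[a,b]$ to $\partial f/\partial\epsilon(t,0)$ as $\epsilon\to 0$. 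Uniform convergence on the compact time scale interval $[a,b]$ allows interchanging the limit with the delta integral (this is the standard parameter-dependent delta-integral result from \cite{Bohner:2004}, which is the reason the hypothesis was stated in this precise form).

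The only subtle point is the passage from uniform convergence of the integrand to convergence of the delta integrals; everything else is bookkeeping via the chain rule. Since this interchange is exactly the content of the cited lemma of Bohner, no further obstacle arises, and the proof is complete.
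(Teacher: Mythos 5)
Your argument is correct and is essentially the standard one: the paper itself gives no proof but defers to \cite{Bohner:2004}, and the proof there is exactly your combination of the classical chain rule for the $\epsilon$-dependence, the mean value theorem to express the difference quotient as $\frac{\partial f}{\partial \epsilon}(t,\xi_{t,\epsilon})$, and the hypothesis of continuity in $\epsilon$ uniformly in $t$ to get uniform convergence of the integrand, which passes through the delta integral on the compact interval $[a,b]$. Nothing is missing.
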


Next we present the following result that is a fundamental tool in the calculus of variations on time scales.

\begin{theorem} [Euler--Lagrange equation on time scales, \cite{Bohner:2004}]  \label{Euler--Lagrange: Bohner} If $y_\ast$ 
is a weak local extremizer for problem (\ref{problem}) and $L$ satisfies the assumption of Lemma \ref{derivation under integral sign}, for every $y$ and $\eta$,
then the components of $y_\ast$ satisfies the $n$
Euler--Lagrange equations
\begin{equation*}
\frac{\Delta}{\Delta t}\frac{\partial L}{\partial y^\Delta_k}(t,y^\sigma_\ast(t),y_\ast^\Delta(t))
=\frac{\partial L}{\partial y^\sigma_k}(t,y^\sigma_\ast(t),y_\ast^\Delta(t)), \quad \quad k=1,\ldots, n
\end{equation*}
for $t\in[a,b]^{\kappa}$.
\end{theorem}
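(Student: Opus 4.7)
The plan is to follow the standard first-variation argument of the calculus of variations, adapted to the delta-calculus. Fix any admissible variation $\eta = (\eta_1, \dots, \eta_n) \in C^1_{rd}([a,b], \mathbb{R}^n)$ with $\eta(a) = \eta(b) = 0$, and consider the real function $\phi(\epsilon) := \mathcal{L}[y_\ast + \epsilon \eta]$. Since $y_\ast$ is a weak local extremizer and $y_\ast + \epsilon \eta$ lies in $\mathcal{D}$ for all $\epsilon$ (the boundary conditions are preserved), $\phi$ has a local extremum at $\epsilon = 0$, so $\dot{\phi}(0) = 0$. By the hypothesis on $L$ and Lemma~\ref{derivation under integral sign}, this yields
\begin{equation*}
\int_a^b \sum_{k=1}^n \left( \frac{\partial L}{\partial y_k^\sigma}(t, y_\ast^\sigma(t), y_\ast^\Delta(t)) \, \eta_k^\sigma(t) + \frac{\partial L}{\partial y_k^\Delta}(t, y_\ast^\sigma(t), y_\ast^\Delta(t)) \, \eta_k^\Delta(t) \right) \Delta t = 0.
\end{equation*}

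Next I would rewrite the second group of terms using delta-integration by parts. Writing $A_k(t) := \frac{\partial L}{\partial y_k^\Delta}(t, y_\ast^\sigma(t), y_\ast^\Delta(t))$ and integrating by parts on the time scale,
\begin{equation*}
\int_a^b A_k(t) \, \eta_k^\Delta(t) \, \Delta t = \bigl[ A_k(t) \, \eta_k(t) \bigr]_a^b - \int_a^b A_k^\Delta(t) \, \eta_k^\sigma(t) \, \Delta t,
\end{equation*}
and the boundary term vanishes because $\eta_k(a) = \eta_k(b) = 0$. Substituting back gives
\begin{equation*}
\int_a^b \sum_{k=1}^n \left( \frac{\partial L}{\partial y_k^\sigma}(t, y_\ast^\sigma(t), y_\ast^\Delta(t)) - \frac{\Delta}{\Delta t} \frac{\partial L}{\partial y_k^\Delta}(t, y_\ast^\sigma(t), y_\ast^\Delta(t)) \right) \eta_k^\sigma(t) \, \Delta t = 0
\end{equation*}
for every admissible variation $\eta$.

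Finally, I would apply the fundamental lemma of the delta-calculus of variations (Dubois--Reymond type) componentwise: choosing $\eta$ with all but one component identically zero isolates each index $k$, and then the arbitrariness of the remaining $\eta_k$ forces the bracketed expression to vanish pointwise on $[a,b]^\kappa$, yielding the $n$ Euler--Lagrange equations. The main point requiring care is regularity: one must check that $A_k$ is delta-differentiable and that the integrand is rd-continuous so that both the integration-by-parts formula and the fundamental lemma apply; this follows from the assumed continuity of $L$ together with its relevant first and second partial derivatives and from $y_\ast \in C^1_{rd}$. No genuinely hard step is involved; the only subtlety is bookkeeping the rd-continuity of $A_k^\Delta$, which is guaranteed by the smoothness hypotheses on $L$ stated at the outset of Section~\ref{Preliminary results}.
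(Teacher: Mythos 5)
The paper itself does not prove this theorem; it is quoted from \cite{Bohner:2004} as a preliminary, so the comparison is with the standard proof in that reference. Your overall strategy (first variation via Lemma~\ref{derivation under integral sign}, then reduction to a fundamental lemma) is the right one, and the first half of your argument --- $\dot{\phi}(0)=0$ and the resulting integral identity --- is exactly as in the cited source. The genuine gap is the integration-by-parts step. You write $\int_a^b A_k(t)\,\eta_k^\Delta(t)\,\Delta t=\bigl[A_k\eta_k\bigr]_a^b-\int_a^b A_k^\Delta(t)\,\eta_k^\sigma(t)\,\Delta t$ with $A_k(t)=\frac{\partial L}{\partial y_k^\Delta}(t,y_\ast^\sigma(t),y_\ast^\Delta(t))$, and you justify the required delta-differentiability of $A_k$ by appealing to the smoothness of $L$ and to $y_\ast\in C^1_{rd}$. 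That inference is false: $y_\ast^\Delta$ is only rd-continuous, so the composite $t\mapsto\frac{\partial L}{\partial y_k^\Delta}(t,y_\ast^\sigma(t),y_\ast^\Delta(t))$ need not be delta-differentiable no matter how smooth $L$ is. This already fails for $\mathbb{T}=\mathbb{R}$ (which is why careful classical treatments use du Bois--Reymond rather than Lagrange's argument), and on a general time scale there is the additional obstruction that no chain rule is available. The delta-differentiability of $A_k$ along the extremal is part of the \emph{conclusion} of the theorem, not something you may assume.

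The standard repair, which is the route taken in \cite{Bohner:2004} and is mirrored by this paper's own Lemma~\ref{higher-orderFundamentalLemma} (note the remark following it: the delta differentiability of $f_0,\dots,f_m$ is \emph{not} assumed in advance), is to integrate by parts the other term. Set $B_k(t):=\int_a^t\frac{\partial L}{\partial y_k^\sigma}(\tau,y_\ast^\sigma(\tau),y_\ast^\Delta(\tau))\,\Delta\tau$, so that $\int_a^b\frac{\partial L}{\partial y_k^\sigma}\,\eta_k^\sigma\,\Delta t=\bigl[B_k\eta_k\bigr]_a^b-\int_a^b B_k\,\eta_k^\Delta\,\Delta t=-\int_a^b B_k\,\eta_k^\Delta\,\Delta t$; the first variation becomes $\int_a^b\bigl(A_k-B_k\bigr)\eta_k^\Delta\,\Delta t=0$ for all admissible $\eta$ (after the componentwise reduction you describe, which is fine). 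The du Bois--Reymond lemma on time scales then gives $A_k-B_k\equiv\mathrm{const}$ on $[a,b]^\kappa$, whence $A_k$ is delta-differentiable with $A_k^\Delta=B_k^\Delta=\frac{\partial L}{\partial y_k^\sigma}$, which is the Euler--Lagrange equation. Alternatively, apply Lemma~\ref{higher-orderFundamentalLemma} with $m=1$ directly to your first-variation identity (at the cost of assuming condition $(H)$, which Bohner's argument does not need). Everything else in your write-up --- the admissibility of $y_\ast+\epsilon\eta$, the vanishing boundary terms, the componentwise isolation of each $k$ --- is correct once this step is fixed.
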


It is well-known that the forward jump operator, $\sigma$, is not delta differentiable for certain time scales.
Also, the chain rule as we know it from the classical calculus (that is, when $\mathbb{T}=\mathbb{R}$) is not valid in general.
However, if we suppose that the time scale $\mathbb{T}$ satisfies the following condition

\begin{description}
\item[$(H)$] \quad \quad for each $t \in\mathbb{T}$, $\sigma(t)= b_1 t +  b_0$ for some
$b_1\in\mathbb{R}^+$ and $b_0\in\mathbb{R}$
\end{description}

\noindent then we can deal with these two limitations as noticed in Remark \ref{rem:rest:H} and Lemma
\ref{lemmaderivadacomposta}.

\begin{remark}
\label{rem:rest:H}
Note that
condition $(H)$ implies that $\sigma$ is delta
differentiable and
$\sigma^{\Delta}(t)=b_1$, $t \in \mathbb{T}^{\kappa}$. Also note that
condition $(H)$ describes, in particular,  the differential calculus
($\mathbb{T}=\mathbb{R}$, $b_1=1$, $b_0=0$), the difference
calculus ($\mathbb{T}=\mathbb{Z}$, $b_1=1$, $b_0=1$), the $h$-calculus
($\mathbb{T}=h \mathbb{Z}:=\{hz : z \in \mathbb{Z}\}$, $b_1=1$, $b_0=h$ for some $h>0$),
and the
$q$-calculus ($\mathbb{T}=q^{\mathbb{N}_0}:=\{ q^k: k \in
\mathbb{N}_0\}$ for some $q>1$, $b_1=q$, $b_0=0$).
\end{remark}

\begin{lemma}[\cite{Rui+Delfim2008}]
\label{lemmaderivadacomposta} Let $\mathbb{T}$ be a time scale satisfying condition $(H)$. If
$f:\mathbb{T}\rightarrow \mathbb{R}$ is two times delta
differentiable, then
$$
\label{Condition-lemma2} f^{\sigma\Delta}(t)=b_1 f^{\Delta\sigma}(t)\, ,
\quad t\in\mathbb{T}^{\kappa^2}.
$$
\end{lemma}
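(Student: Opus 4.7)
The plan is to verify the identity by direct computation from the definition of the delta derivative, exploiting the explicit affine form $\sigma(t) = b_1 t + b_0$ given by condition $(H)$. The key ingredient is a compatibility relation for the graininess $\mu(t):=\sigma(t)-t$, which under $(H)$ equals $(b_1-1)t + b_0$. A one-line computation gives
\[
\mu(\sigma(t)) \;=\; (b_1-1)(b_1 t + b_0) + b_0 \;=\; b_1\bigl[(b_1-1)t + b_0\bigr] \;=\; b_1\,\mu(t).
\]

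I would then split into two cases. In the right-scattered case $\mu(t) > 0$, the relation above together with $b_1 > 0$ yields $\mu(\sigma(t)) > 0$, so $\sigma(t)$ is also right-scattered, and the definition of the delta derivative gives
\[
f^{\sigma\Delta}(t) \;=\; \frac{f(\sigma^2(t)) - f(\sigma(t))}{\mu(t)}, \qquad f^{\Delta\sigma}(t) \;=\; f^\Delta(\sigma(t)) \;=\; \frac{f(\sigma^2(t)) - f(\sigma(t))}{\mu(\sigma(t))},
\]
so $f^{\sigma\Delta}(t) = \bigl(\mu(\sigma(t))/\mu(t)\bigr)\,f^{\Delta\sigma}(t) = b_1\, f^{\Delta\sigma}(t)$.

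In the right-dense case $\sigma(t) = t$, the delta derivatives at $t$ and at $\sigma(t) = t$ both reduce to the ordinary derivative at $t$. Applying the classical chain rule to $s \mapsto f(\sigma(s)) = f(b_1 s + b_0)$ at $s = t$, which is valid because $\sigma$ is a single affine function on all of $\mathbb{T}$, gives $f^{\sigma\Delta}(t) = b_1\,f'(\sigma(t)) = b_1\,f^{\Delta\sigma}(t)$. The twice delta differentiability hypothesis on $f$ guarantees that $f^\Delta$ is defined on $\mathbb{T}^\kappa$, so both $f^{\sigma\Delta}(t)$ and $f^{\Delta\sigma}(t)$ are meaningful for $t\in\mathbb{T}^{\kappa^2}$.

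The only mildly delicate bookkeeping is handling the right-dense case and verifying that $\sigma(t)\in\mathbb{T}^\kappa$, but these reduce either to the ordinary chain rule or to the identity $\mu(\sigma(t))=b_1\mu(t)$ just established, so I do not expect a substantive obstacle.
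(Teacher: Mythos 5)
Your argument is correct, but note that the paper itself offers no proof of this lemma --- it is simply quoted from \cite{Rui+Delfim2008} --- so you are supplying a self-contained argument where the authors rely on a citation. Your route, a case split on whether $t$ is right-scattered or right-dense driven by the identity $\mu(\sigma(t))=b_1\mu(t)$, checks out: in the scattered case the two difference quotients differ exactly by the factor $\mu(\sigma(t))/\mu(t)=b_1$, and in the dense case the substitution $u=\sigma(s)=b_1s+b_0$ in the limit defining $(f\circ\sigma)^\Delta(t)$ produces the factor $b_1$ (here $\sigma(s)=t$ forces $s=t$, so the quotient is well defined, and the restricted limit over $\sigma(\mathbb{T})$ agrees with $f^\Delta(t)$ because that full limit exists). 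The one phrase I would tighten is the appeal to ``the classical chain rule'': $f$ is defined only on $\mathbb{T}$, so what you are really doing is a change of variable inside the limit defining the delta derivative at a right-dense point, not an application of the chain rule for functions on $\mathbb{R}$; as written it reads as if you were differentiating $f(b_1s+b_0)$ as a function of a real variable. The more standard (and shorter) proof in the cited source avoids the case analysis entirely: write $f^\sigma=f+\mu f^\Delta$, delta-differentiate using the product rule and $\mu^\Delta=b_1-1$ (which follows from $(H)$), and then use $f^{\Delta\sigma}=f^\Delta+\mu f^{\Delta\Delta}$ to collect terms into $b_1f^{\Delta\sigma}$. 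That computation buys uniformity and no bookkeeping about whether $\sigma(t)$ is scattered; your version buys transparency about where the constant $b_1$ comes from, namely the dilation of the graininess under $\sigma$. Either way the statement is established.
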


The next result is also useful for the proofs of our main results.

\begin{lemma}[cf. \cite{MartinsTorres2012}] \label{lemma0}
Assume that the time scale $\mathbb{T}$ satisfies condition $(H)$, $m \in \mathbb{N}$,
and $\eta \in C^{2m}_{rd}([a,b], \mathbb{R})$ is such that
$\eta^{\Delta ^{i}}(a)=0$ for all $i=0,1,\ldots, m$.
Then, $\eta^{\sigma \Delta^{i-1}}(a)=0$ for each $i=1,\ldots, m$.
\end{lemma}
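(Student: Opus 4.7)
The plan is to reduce the assertion $\eta^{\sigma\Delta^{i-1}}(a)=0$ to the statement that $\eta^{\Delta^{i-1}\sigma}(a)=0$, and then to verify the latter using the standard ``simple useful formula'' $f^\sigma = f + \mu f^\Delta$.

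First I would iterate Lemma \ref{lemmaderivadacomposta} to commute $\sigma$ past successive delta derivatives. Applying the lemma to $f=\eta$ gives $\eta^{\sigma\Delta}(t)=b_1\eta^{\Delta\sigma}(t)$. Differentiating this identity and applying the lemma again to $\eta^\Delta$, one obtains $\eta^{\sigma\Delta^2}(t)=b_1^{\,2}\eta^{\Delta^2\sigma}(t)$. A straightforward induction on $i$ (using that $\eta$ has enough delta derivatives by the assumption $\eta\in C^{2m}_{rd}$, so each application of the lemma is legal) yields
\begin{equation*}
\eta^{\sigma\Delta^{i-1}}(t)=b_1^{\,i-1}\,\eta^{\Delta^{i-1}\sigma}(t), \qquad i=1,\ldots,m,
\end{equation*}
for all $t$ in the appropriate subset of $\mathbb{T}$, in particular at $t=a$.

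Next I would show that $\eta^{\Delta^{i-1}\sigma}(a)=0$ for each $i=1,\ldots,m$. By the simple useful formula applied to the rd-continuously delta differentiable function $\eta^{\Delta^{i-1}}$, we have
\begin{equation*}
\eta^{\Delta^{i-1}\sigma}(a)=\eta^{\Delta^{i-1}}(a)+\mu(a)\,\eta^{\Delta^{i}}(a).
\end{equation*}
The hypothesis $\eta^{\Delta^i}(a)=0$ for $i=0,1,\ldots,m$ makes both terms vanish. Combining this with the commutation identity from the previous step,
\begin{equation*}
\eta^{\sigma\Delta^{i-1}}(a)=b_1^{\,i-1}\,\eta^{\Delta^{i-1}\sigma}(a)=0,
\end{equation*}
which is the desired conclusion.

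The only substantive step is the inductive verification of the commutation identity $\eta^{\sigma\Delta^{i-1}}=b_1^{\,i-1}\eta^{\Delta^{i-1}\sigma}$; once that is in hand, the rest is immediate from the simple useful formula and the boundary hypotheses. The role of condition $(H)$ is precisely to legitimize this commutation via Lemma \ref{lemmaderivadacomposta} (so that $\sigma^\Delta$ exists and equals $b_1$), which is what prevents a naive attempt on an arbitrary time scale.
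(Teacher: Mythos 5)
Your argument is correct: the inductive commutation $\eta^{\sigma\Delta^{i-1}}=b_1^{\,i-1}\eta^{\Delta^{i-1}\sigma}$ via Lemma \ref{lemmaderivadacomposta} is legitimate under the smoothness hypothesis, and the formula $f^{\sigma}=f+\mu f^{\Delta}$ kills $\eta^{\Delta^{i-1}\sigma}(a)$ using the boundary conditions at orders $i-1$ and $i$. The paper itself offers no proof of this lemma (it defers to the cited reference), but your method is essentially the same technique the authors use for the adjacent Lemmas \ref{lemma1} and \ref{lemma2} — indeed slightly cleaner, since the identity $f^{\sigma}=f+\mu f^{\Delta}$ handles the right-dense and right-scattered cases uniformly, whereas the paper's analogous proofs treat them separately via the difference quotient.
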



\section{Main results}
\label{Main results}

In this section we formulate and prove the second Noether theorem for single and multiple integral variational problems.

\subsection{Noether's second theorem - single delta integral case}
\label{Second Noether's Theorem - single integral case}

In this subsection we suppose that the time scale $\mathbb{T}$ satisfies condition (H) and that
$L$ satisfies the assumption of Lemma \ref{derivation under integral sign}, for every $y$ and $\eta$.
As usual, $\eta^{\Delta^0}$ and $\eta^{\sigma^0}$ denote $\eta$.
Let $m$ be a fixed natural number.
We also assume that the time scale interval $[a,b]$ has, at least, $2m+1$ points.

We begin with some technical results that will be useful in the proofs of Theorems \ref{Theorem without transformation of time}
and \ref{Theorem with transformation of time}.

\begin{lemma} [Higher-order fundamental lemma of the calculus of variations] \label{higher-orderFundamentalLemma}
Let $\mathbb{T}$ be a time scale satisfying condition (H)
and  $f_0, f_1, \cdots, f_m \in C_{rd}([a,b], \mathbb{R})$. If

$$\int_{a}^{\rho^{m-1}(b)}
\left(\sum_{i=0}^{m}f_i(t) \eta^{\sigma^{m-i}\Delta^{i}}(t) \right)
\Delta t=0$$

for all $\eta \in C_{rd}^{2m}([a, b], \mathbb{R})$ such that
\begin{align}
\eta\left(a\right)=0,& \quad \eta\left(\rho^{m-1}(b)\right)=0, \nonumber\\
&\vdots\nonumber \\
\eta^{\Delta^{m-1}}\left(a\right)=0,&\quad
\eta^{\Delta^{m-1}}\left(\rho^{m-1}(b)\right)=0,\nonumber
\end{align}

then

$$\sum_{i=0}^{m} (-1)^i \left(\frac{1}{b_1}\right)^{\frac{i(i-1)}{2}}f_i^{\Delta^i}(t) =0 \ ,  \ \ \ \ t \in [a,b]^{\kappa^{m}}.$$
\end{lemma}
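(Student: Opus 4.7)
The plan is to reduce the identity, term by term, to the classical form $\int_{a}^{\rho^{m-1}(b)} G(t)\eta^{\sigma^m}(t)\,\Delta t = 0$, where $G = \sum_{i=0}^{m}(-1)^{i}(1/b_{1})^{i(i-1)/2}f_{i}^{\Delta^{i}}$, and then invoke a fundamental lemma of the calculus of variations on time scales to conclude $G\equiv 0$ on $[a,b]^{\kappa^{m}}$. The mechanism is the usual one: for each fixed $i$, perform $i$ successive time-scale integrations by parts on $\int f_{i}(t)\eta^{\sigma^{m-i}\Delta^{i}}(t)\,\Delta t$, each one stripping one $\Delta$ off $\eta$ and grafting it onto $f_{i}$.

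The delicate part is that the standard time-scale integration by parts produces a factor $v^{\sigma}$ where one would like to see a bare $v$, so after the $k$-th step we are left with $\eta^{\sigma^{m-i+k-1}\Delta^{i-k}\sigma}$ inside the remaining integral. I would systematically push the rightmost $\sigma$ back past the $(i-k)$ remaining $\Delta$'s by iterating Lemma~\ref{lemmaderivadacomposta}, which gives the commutation rule $f^{\Delta\sigma}=(1/b_{1})f^{\sigma\Delta}$; pushing one $\sigma$ past $j$ $\Delta$'s costs a factor $(1/b_{1})^{j}$. Thus the $k$-th step contributes a sign $-1$ and a factor $(1/b_{1})^{i-k}$, and after all $i$ steps the total multiplicative constant on the term $f_{i}^{\Delta^{i}}\eta^{\sigma^{m}}$ is
\[
(-1)^{i}\left(\frac{1}{b_{1}}\right)^{(i-1)+(i-2)+\cdots+1+0}=(-1)^{i}\left(\frac{1}{b_{1}}\right)^{i(i-1)/2},
\]
which is exactly the coefficient appearing in the conclusion.

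Next I would verify that every boundary contribution generated along the way vanishes. The boundary term produced at the $k$-th integration by parts on the $i$-th summand has the shape $f_{i}^{\Delta^{k-1}}\cdot\eta^{\sigma^{m-i+k-1}\Delta^{i-k}}\big|_{a}^{\rho^{m-1}(b)}$. Using the commutation rule to rewrite this as a constant times $\eta^{\Delta^{i-k}}$ evaluated at $\sigma^{m-i+k-1}$ of the endpoint, I would invoke Lemma~\ref{lemma0}, iterated the appropriate number of times, which is the exact tool designed to convert vanishing of $\eta^{\Delta^{j}}$ at an endpoint into vanishing of the shifted versions at $\sigma^{l}$ of that endpoint (the hypotheses $\eta^{\Delta^{j}}(a)=\eta^{\Delta^{j}}(\rho^{m-1}(b))=0$ for $j=0,\ldots,m-1$ are precisely what is needed). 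Once all boundary pieces are killed, summing over $i=0,1,\ldots,m$ gives the desired identity for all admissible $\eta$, and a higher-order version of the fundamental lemma (applied to the arbitrary $\eta^{\sigma^{m}}$) produces the pointwise conclusion on $[a,b]^{\kappa^{m}}$.

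The main obstacle I anticipate is purely bookkeeping: keeping the sign $(-1)^{i}$, the cumulative power of $1/b_{1}$, and the current shift/derivative pattern on $\eta$ consistent at every intermediate step, and being careful that each boundary term that appears really falls within the range of applicability of Lemma~\ref{lemma0}. Everything else is a direct combination of the time-scale product rule, Lemma~\ref{lemmaderivadacomposta}, and the usual du~Bois-Reymond-type argument.
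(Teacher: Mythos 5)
Your coefficient bookkeeping is right: pushing a single $\sigma$ leftward past $j$ deltas via Lemma~\ref{lemmaderivadacomposta} costs $(1/b_1)^j$, and accumulating $(i-1)+(i-2)+\cdots+0$ over the $i$ integrations by parts does give $(-1)^i(1/b_1)^{i(i-1)/2}$; the boundary terms are all of the form $\eta^{\sigma^{l}\Delta^{j}}$ with $l+j=m-1$ and are indeed killed by the hypotheses together with Lemmas~\ref{lemma0}--\ref{lemma2} (and their analogues at $\rho^{m-1}(b)$). But there is a genuine gap at the very first step: your integration by parts writes $\int f_i\,v^{\Delta}\,\Delta t=[f_iv]-\int f_i^{\Delta}v^{\sigma}\,\Delta t$, which presupposes that $f_i$ is delta differentiable, and after $i$ steps that $f_i^{\Delta^i}$ exists. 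The lemma only assumes $f_i\in C_{rd}([a,b],\mathbb{R})$, and the remark immediately following it in the paper stresses precisely that ``the delta differentiability of the functions $f_0,f_1,\cdots,f_m$ was not assumed in advance.'' The existence of the derivatives appearing in the conclusion is part of what must be \emph{proved}, so your plan cannot even begin under the stated hypotheses.

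The intended argument (the paper defers to Lemma~16 of \cite{Martins+Torres-2009}, of which this is the delta analogue) runs in the opposite direction, du~Bois-Reymond style: one introduces iterated antiderivatives of $f_0,\dots,f_{m-1}$ and integrates by parts so as to shift \emph{additional} derivatives onto $\eta$ --- this is exactly why the test functions are taken in $C^{2m}_{rd}$ rather than $C^{m}_{rd}$ --- until the condition reads $\int\bigl(f_m-\Phi\bigr)\eta^{\sigma^{?}\Delta^{m}}\,\Delta t=0$ for an explicit rd-continuous $\Phi$ built from antiderivatives of the lower-order $f_i$. The time-scale du~Bois-Reymond lemma then forces $f_m-\Phi$ to be a generalized polynomial, from which one deduces a posteriori that the required delta derivatives exist and that the stated identity holds. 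If you simply add the hypothesis $f_i\in C^{i}_{rd}$ your computation goes through and recovers the correct coefficients, but as a proof of the lemma as stated it is incomplete; you would need to restructure it around antidifferentiation of the $f_i$ and the first-order du~Bois-Reymond lemma.
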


\begin{proof} The proof is similar to the proof of Lemma 16 of  \cite{Martins+Torres-2009}.
\end{proof}

\begin{remark}  We emphasize that the delta differentiability of the
functions $f_0, f_1, \cdots, f_m$ was not assumed in advance.
\end{remark}


\begin{lemma} \label{lemma1}
Assume that the time scale $\mathbb{T}$ satisfies condition (H) and $\eta \in C_{rd}^{2m}([a,b],\mathbb{R})$ is such that,
$$
\eta ^{\Delta^i}(a)=0, \quad \quad i=0,1, \ldots, m.
$$
Then,
$$
\eta ^{\sigma^i}(a)=0, \quad \quad i=0,1, \ldots, m.
$$
\end{lemma}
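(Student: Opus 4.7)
The plan is to prove, by induction on $k$, the stronger claim
\[
\eta^{\Delta^j}(\sigma^k(a)) = 0 \quad \text{for all integers } j, k \geq 0 \text{ with } j + k \leq m.
\]
The conclusion of the lemma then follows by setting $j = 0$ and letting $k$ range over $\{0, 1, \ldots, m\}$, since $\eta^{\sigma^k}(a) = \eta(\sigma^k(a))$.

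The base case $k = 0$ is precisely the hypothesis $\eta^{\Delta^i}(a) = 0$ for $i = 0, 1, \ldots, m$. For the inductive step, assume the claim for some $k \geq 0$, and fix $j$ with $j + (k+1) \leq m$. Applying the elementary time-scale identity $f^\sigma(t) = f(t) + \mu(t) f^\Delta(t)$ to $f = \eta^{\Delta^j}$ at the point $t = \sigma^k(a)$ yields
\[
\eta^{\Delta^j}(\sigma^{k+1}(a)) = \eta^{\Delta^j}(\sigma^k(a)) + \mu(\sigma^k(a))\, \eta^{\Delta^{j+1}}(\sigma^k(a)).
\]
The first term on the right vanishes by the inductive hypothesis (applied with indices $(j, k)$, for which $j + k \leq m - 1$), and the second term vanishes similarly (applied with $(j+1, k)$, for which $(j+1) + k \leq m$). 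Hence $\eta^{\Delta^j}(\sigma^{k+1}(a)) = 0$, completing the induction.

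The only bookkeeping to watch is that $\eta^{\Delta^{j+1}}$ must be defined at $\sigma^k(a)$ for the indices used; this is ensured by the regularity assumption $\eta \in C^{2m}_{rd}([a,b], \mathbb{R})$ together with the standing assumption that $[a,b]$ contains at least $2m+1$ points, which provides enough room in the time scale to the right of $\sigma^k(a)$ for every derivative order $j+1 \leq m$. I do not anticipate any serious obstacle: the argument is a clean two-variable induction driven by the single identity $f^\sigma = f + \mu f^\Delta$, and it does not even need to invoke condition~$(H)$ or the chain-rule identity of Lemma~\ref{lemmaderivadacomposta}, though both are available if a more symmetric presentation is preferred.
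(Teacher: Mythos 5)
Your proof is correct, but it follows a genuinely different and more economical route than the paper's. The paper splits into the cases $a$ right-dense (trivial) and $a$ right-scattered, and in the latter case repeatedly combines the quotient formula $f^{\Delta}(a)=\bigl(f^{\sigma}(a)-f(a)\bigr)/\bigl(\sigma(a)-a\bigr)$ with the commutation identity $f^{\sigma\Delta}=b_1 f^{\Delta\sigma}$ of Lemma~\ref{lemmaderivadacomposta} --- which is precisely where condition $(H)$ enters --- working out $\eta^{\sigma}(a)=0$, $\eta^{\sigma^2}(a)=0$, $\eta^{\sigma^3}(a)=0$ explicitly and then asserting that the process continues recursively. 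Your two-parameter induction on the stronger claim $\eta^{\Delta^j}(\sigma^k(a))=0$ for $j+k\le m$, driven solely by the pointwise identity $f^{\sigma}(t)=f(t)+\mu(t)f^{\Delta}(t)$, replaces that case analysis and the hand-over-hand commutation with a single uniform step: at a right-dense point $\mu=0$ and the step is vacuous, at a right-scattered point it is the quotient formula in disguise. As you note, this dispenses with condition $(H)$ and with Lemma~\ref{lemmaderivadacomposta} entirely, so your version of the lemma is valid on an arbitrary time scale --- a mild but real strengthening. The only hypothesis you use beyond the vanishing of the derivatives at $a$ is that $\eta^{\Delta^{j+1}}$ exists at $\sigma^k(a)$ whenever $j+k+1\le m$, and your bookkeeping via $\eta\in C^{2m}_{rd}([a,b],\mathbb{R})$ together with the standing assumption that $[a,b]$ contains at least $2m+1$ points is the right way to secure this.
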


\begin{proof}
If $a$ is right-dense, the result is trivial. Suppose that $a$ is right-scattered.
Since
$\displaystyle \eta^{\Delta}(a)=\frac{\eta^{\sigma}(a)-\eta(a)}{\sigma(a)-a}=0$ and $\eta(a)=0$, we conclude that $\eta^\sigma(a)=0$.
Since
$\displaystyle  \eta^{\Delta^2}(a)=(\eta^{\Delta})^{\Delta}(a)=\frac{(\eta^{\Delta})^{\sigma}(a)-\eta^{\Delta}(a)}{\sigma(a)-a}=0$
and $\eta^{\Delta}(a)=0$,
then
$(\eta^{\Delta})^{\sigma}(a)=0$.
Using Lemma \ref{lemmaderivadacomposta}, we get $(\eta^{\sigma})^{\Delta}(a)=0$.
Since
$
\displaystyle (\eta^{\sigma})^{\Delta}(a)= \frac{\eta^{\sigma^2}(a)-\eta^{\sigma}(a)}{\sigma(a)-a}=0
$
and
$\eta^{\sigma}(a)=0$, we conclude that $\eta^{\sigma^2}(a)=0$.

Since
$\displaystyle  \eta^{\Delta^3}(a)=(\eta^{\Delta^2})^{\Delta}(a)=\frac{(\eta^{\Delta^2})^{\sigma}(a)-\eta^{\Delta^2}(a)}{\sigma(a)-a}=0$
and $\eta^{\Delta^2}(a)=0$ ,
then
$\eta^{\Delta^2\sigma}(a)=0$.
Using Lemma \ref{lemmaderivadacomposta}, we get $(\eta^{\sigma})^{\Delta^2}(a)=0$.
Since
$\displaystyle (\eta^{\sigma})^{\Delta^2}(a) = \frac{\eta^{\sigma\Delta\sigma}(a) - \eta^{\sigma\Delta}(a)}{\sigma(a)-a}=0$ and $\eta^{\sigma\Delta}(a)=0$, then
$\eta^{\sigma \Delta \sigma}(a)=0$. Lemma \ref{lemmaderivadacomposta} proves that $\eta^{\sigma^2 \Delta}(a)=0$. Since,
$
\displaystyle \eta^{\sigma^2 \Delta}(a)= \frac{\eta^{\sigma^3}(a) - \eta^{\sigma^2}(a)}{\sigma(a)-a}=0
$
and
$\eta^{\sigma^2}(a)=0$,
we get
$\eta^{\sigma^3}(a)=0$.
Repeating recursively this process, we conclude the proof.
\end{proof}

\begin{lemma} \label{lemma2}
Assume that the time scale $\mathbb{T}$ satisfies condition (H) and $\eta \in C_{rd}^{2(m-1)}([a,b],\mathbb{R})$ is such that,
$$
\eta ^{\Delta^i}(a)=0, \quad \quad i=0,1, \ldots, m-1.
$$
Then,
$$
\eta^{\sigma\Delta^{m-2}}(a)= \eta^{\sigma^2\Delta^{m-3}}(a) = \eta^{\sigma^3\Delta^{m-4}}(a)= \ldots = \eta^{\sigma^{m-2}\Delta}(a)= 0.
$$
\end{lemma}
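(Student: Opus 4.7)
The plan is to mimic the iterative argument used in the proof of Lemma \ref{lemma1}, combining the definition of the delta derivative at a right-scattered point with the commutation rule of Lemma \ref{lemmaderivadacomposta}. If $a$ is right-dense then $\sigma^j(a)=a$ for every $j$, and each $\eta^{\sigma^j \Delta^{m-1-j}}(a) = \eta^{\Delta^{m-1-j}}(a) = 0$ for $1 \le j \le m-2$ by hypothesis, so there is nothing to prove. I therefore assume throughout that $a$ is right-scattered; since $[a,b]$ has at least $2m+1$ points, the iterates $\sigma(a), \sigma^2(a), \ldots, \sigma^{m-2}(a)$ all lie in $[a,b]$ and the expressions below are meaningful.

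The main technical step is an auxiliary induction on $k$ showing
$$
\eta^{\Delta^i \sigma^k}(a) = 0 \qquad \text{for every } i, k \ge 0 \text{ with } i+k \le m-1.
$$
The base case $k=0$ is exactly the hypothesis. For the inductive step I apply the identity $f^\sigma(c) = f(c) + (\sigma(c)-c)\,f^\Delta(c)$ with $c = \sigma^k(a)$ and $f = \eta^{\Delta^i}$, obtaining
$$
\eta^{\Delta^i \sigma^{k+1}}(a) = \eta^{\Delta^i \sigma^k}(a) + \bigl(\sigma^{k+1}(a)-\sigma^k(a)\bigr)\, \eta^{\Delta^{i+1} \sigma^k}(a),
$$
and both terms on the right-hand side vanish by the inductive hypothesis as long as $i + (k+1) \le m-1$.

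It then remains to move the $\sigma$'s past the $\Delta$'s in order to match the form of the conclusion. A routine induction on $k$ based on Lemma \ref{lemmaderivadacomposta} yields the iterated commutation identity
$$
\eta^{\sigma^k \Delta^j}(t) = b_1^{k j}\, \eta^{\Delta^j \sigma^k}(t),
$$
valid wherever both sides are defined. Specialising to $j = m-1-k$ and $t = a$, and combining with the auxiliary claim above, gives $\eta^{\sigma^k \Delta^{m-1-k}}(a) = b_1^{k(m-1-k)}\cdot 0 = 0$ for each $k = 1, \ldots, m-2$, which is precisely the statement. Once these two ingredients are in place the proof is essentially bookkeeping; the only point that genuinely uses condition $(H)$ is the commutation identity, since on a generic time scale $\sigma$ and $\Delta$ need not commute even up to a scalar, so this is the step I expect to deserve the most care.
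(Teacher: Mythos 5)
Your proof is correct, and while it rests on the same two ingredients as the paper's argument --- the identity $f^{\sigma}(c)=f(c)+\mu(c)f^{\Delta}(c)$ at a right-scattered point and the commutation rule of Lemma \ref{lemmaderivadacomposta} --- you organize them differently and, in my view, more transparently. The paper's proof is an interleaved recursive chase: at each stage it converts one $\Delta^{i}\sigma$ into $\sigma\Delta^{i}$, invokes Lemma \ref{lemma0} to supply intermediate vanishing values, and ends with ``repeating recursively this process.'' You instead prove a single stronger statement, $\eta^{\Delta^{i}\sigma^{k}}(a)=0$ on the whole triangle $i+k\le m-1$, by a clean one-parameter induction on $k$ (your inductive step is sound: the condition $i+(k+1)\le m-1$ guarantees both terms on the right-hand side vanish by the level-$k$ hypothesis), and only then convert wholesale to the $\sigma^{k}\Delta^{j}$ ordering via the iterated identity $\eta^{\sigma^{k}\Delta^{j}}=b_{1}^{kj}\,\eta^{\Delta^{j}\sigma^{k}}$, which follows from Lemma \ref{lemmaderivadacomposta} by a routine double induction. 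What this buys you is (i) independence from Lemma \ref{lemma0}, whose content your triangular claim subsumes, (ii) an explicit record of the nonzero constants $b_{1}^{kj}$ that the paper leaves implicit, and (iii) a proof that visibly terminates rather than appealing to an unspecified recursion. The only housekeeping you should make explicit is the regularity count: the inductive step uses $\eta^{\Delta^{i+1}}$ at $\sigma^{k}(a)$ only for $i+1\le m-1$, and the iterated commutation needs $\eta^{\sigma^{k}}$ to be about $m-1$ times delta differentiable; both are comfortably covered by $\eta\in C_{rd}^{2(m-1)}$, so there is no gap.
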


\begin{proof}
If $a$ is right-dense, the result is trivial. Suppose that $a$ is right-scattered.
Since $\displaystyle  \eta^{\Delta^{m-1}}(a)=\frac{\eta^{\Delta^{m-2}\sigma}(a) - \eta^{\Delta^{m-2}}(a)}{\sigma(a)-a}=0$ and $\eta^{\Delta^{m-2}}(a)=0$,
then $\eta^{\Delta^{m-2}\sigma}(a)=0$. Using Lemma \ref{lemmaderivadacomposta}, we get $\eta^{\sigma\Delta^{m-2}}(a)=0$ (or use Lemma \ref{lemma0}).
Note that
$\displaystyle \eta^{\sigma\Delta^{m-2}}(a)= \frac{\eta^{\sigma\Delta^{m-3}\sigma}(a) - \eta^{\sigma\Delta^{m-3}}(a)}{\sigma(a)-a}=0$.
Lemma \ref{lemma0} shows that
$\eta^{\sigma\Delta^{m-3}}(a)=0$, hence
$\eta^{\sigma\Delta^{m-3}\sigma}(a)=0$. Using Lemma \ref{lemmaderivadacomposta} we get
$\eta^{\sigma^2\Delta^{m-3}}(a)=0$.
Next we prove that $\eta^{\sigma^2\Delta^{m-4}}(a)=0$.
Since $\displaystyle \eta^{\sigma\Delta^{m-3}}(a)= \frac{\eta^{\sigma\Delta^{m-4}\sigma}(a) - \eta^{\sigma\Delta^{m-4}}(a)}{\sigma(a)-a}=0$ and
$\eta^{\sigma\Delta^{m-4}}(a)=0$ (by Lemma \ref{lemma0}), then $\eta^{\sigma\Delta^{m-4}\sigma}(a)=0$. Lemma \ref{lemmaderivadacomposta}
shows that $\eta^{\sigma^2\Delta^{m-4}}(a)=0$.
Since
$\displaystyle \eta^{\sigma^2\Delta^{m-3}}(a)=  \frac{\eta^{\sigma^2\Delta^{m-4}\sigma}(a) - \eta^{\sigma^2\Delta^{m-4}}(a)}{\sigma(a)-a}=0$
and
$\eta^{\sigma^2\Delta^{m-4}}(a)=0$, then $\eta^{\sigma^2\Delta^{m-4}\sigma}(a)=0$. Lemma \ref{lemmaderivadacomposta} shows that
$\eta^{\sigma^3\Delta^{m-4}}(a)=0$.
Repeating recursively this process, we prove the intended result.
\end{proof}

Let $y:=(y_1, y_2, \cdots, y_n)$.
Firstly we will prove the second Noether theorem without transformation of time.
For that consider the following transformations that depend on arbitrary functions $p_1, p_2, \ldots, p_r$ and their delta-derivatives up to order $m$:

\begin{equation}\label{without transformations of time}
\left\{
\begin{array}{ccl}
\overline{t}  & = & t\\
& & \\
\overline{y}_k(t)& = & y_k(t) + \displaystyle \sum_{j=1}^{r}T^{kj}(p_{j})(t), \quad k= 1,2,\ldots, n
\end{array}
\right.
\end{equation}
\\

where, for each $j=1,2,\ldots, r$, 
$p_{j} \in C^{2m}_{rd}([a,\sigma^{m}(b)], \mathbb{R})$,

$$
T^{kj}(p_{j}):= \sum_{i=0}^m g^{k}_{ij}p^{\sigma^{m-(i+1)} \Delta^i}_{j}
$$
and $g^{k}_{ij} \in C^{1}_{rd}([a,b], \mathbb{R})$.

\begin{definition}\label{invariance y}
Functional $\mathcal{L}$ is invariant under transformations (\ref{without transformations of time})
if, and only if, for all $y\in C^1_{rd}([a,b],\mathbb{R})$ we have
$$
\int_a^b L(t,y^\sigma(t),y^\Delta(t)) \Delta t = \int_a^b L(t,\overline{y}^\sigma(t),\overline{y}^\Delta(t)) \Delta t.
$$
\end{definition}

\begin{remark} Note that the most common  definition of invariance (with equality of the integrals for
any subinterval $[t_a,t_b]\subseteq [a,b]$ with $a,b \in \mathbb{T}$)
implies Definition \ref{invariance y}.
\end{remark}

\begin{theorem}[Necessary condition of invariance]\label{necessary condition for invariance}
If functional $\mathcal{L}$ is invariant under transformations (\ref{without transformations of time}), then
\begin{equation}\label{invariance}
\displaystyle \sum_{k=1}^{n} \int_a^b  \left(
\frac{\partial L}{\partial y^\sigma_k} (t,y^\sigma(t),y^\Delta(t))
 \cdot \left( \sum_{j=1}^{r} T^{kj}(p_{j}) \right)^\sigma (t)
 + \frac{\partial L}{\partial y^\Delta_k} (t,y^\sigma(t),y^\Delta(t)) \cdot
\left( \sum_{j=1}^{r} T^{kj}(p_{j}) \right)^\Delta (t)
\right) \Delta t=0.
\end{equation}
\end{theorem}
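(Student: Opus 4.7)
The plan is to derive (\ref{invariance}) by linearizing the invariance condition. The key observation is that each operator $T^{kj}$ is linear in its argument $p_j$, so $T^{kj}(\epsilon p_j) = \epsilon\, T^{kj}(p_j)$ for every $\epsilon \in \mathbb{R}$. I would therefore replace each $p_j$ in (\ref{without transformations of time}) by $\epsilon p_j$, producing the one-parameter family
$$\bar{y}^\epsilon_k(t) = y_k(t) + \epsilon\, \eta_k(t), \qquad \eta_k(t) := \sum_{j=1}^r T^{kj}(p_j)(t), \quad k=1,\ldots,n.$$
Because $p_j \in C^{2m}_{rd}$ and $g^{k}_{ij} \in C^{1}_{rd}$, each $\eta_k$ belongs to $C^1_{rd}([a,b],\mathbb{R})$, so $\bar{y}^\epsilon$ is an admissible perturbation of $y$ (the endpoint values of $\eta$ play no role in what follows).

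Next, since Definition \ref{invariance y} imposes the integral identity for \emph{arbitrary} choices of $p_1,\ldots,p_r$, it remains valid when $p_j$ is replaced by $\epsilon p_j$. Consequently
$$\phi(\epsilon) := \int_a^b L\bigl(t, (\bar{y}^\epsilon)^\sigma(t), (\bar{y}^\epsilon)^\Delta(t)\bigr)\, \Delta t$$
is constant in $\epsilon$, hence $\dot\phi(0)=0$.

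Finally, I would compute $\dot\phi(0)$ using the differentiation-under-the-integral calculation of Lemma \ref{derivation under integral sign}. Although that lemma is stated for admissible variations (i.e.\ $\eta(a)=\eta(b)=0$), the endpoint conditions are not actually used in obtaining the formula for $\dot\phi(0)$; they only enter later when one integrates by parts to derive the Euler--Lagrange equation. Thus the formula
$$\dot\phi(0) = \int_a^b \sum_{k=1}^n \left( \frac{\partial L}{\partial y_k^\sigma}(t,y^\sigma,y^\Delta)\,\eta_k^\sigma(t) + \frac{\partial L}{\partial y_k^\Delta}(t,y^\sigma,y^\Delta)\,\eta_k^\Delta(t) \right) \Delta t$$
remains valid for our $\eta$. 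Setting it equal to zero and unfolding $\eta_k = \sum_j T^{kj}(p_j)$ (interchanging the sum with $(\cdot)^\sigma$ and $(\cdot)^\Delta$) yields exactly (\ref{invariance}). The only mildly delicate point is this last remark about applying Lemma \ref{derivation under integral sign} outside its literal hypotheses; if one prefers, the brief chain-rule-under-the-integral calculation underlying that lemma can simply be rerun verbatim, as nothing in it requires $\eta$ to vanish at $a$ or $b$.
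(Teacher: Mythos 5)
Your proposal is correct and follows essentially the same route as the paper: replace $p_j$ by $\varepsilon p_j$ (using the arbitrariness of the $p_j$ and the linearity of $T^{kj}$), observe that the resulting function of $\varepsilon$ is constant, and differentiate at $\varepsilon=0$ via Lemma \ref{derivation under integral sign}. Your side remark that the endpoint conditions in that lemma are not actually needed for the formula for $\dot\phi(0)$ is a fair point of care that the paper passes over silently, but it does not change the argument.
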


\begin{proof}
Using the definition of invariance and noting that the family of transformations
(\ref{without transformations of time}) depend upon arbitrary functions
$p_1, p_2, \ldots, p_r$, we conclude that,
for any real number $\varepsilon$,

$
\begin{array}{lc}
\int_a^b L(t,y^\sigma(t),y^\Delta(t)) \Delta t &=
\int_a^b L\left(t,y_1^\sigma(t)+\varepsilon \left( \sum_{j=1}^{r} T^{1j}(p_{j}) \right)^\sigma (t), \ldots,
y_n^\sigma(t)+\varepsilon \left( \sum_{j=1}^{r} T^{nj}(p_{j}) \right)^\sigma (t)\right., \\
& \left.y_1^\Delta(t)+\varepsilon \left( \sum_{j=1}^{r} T^{1j}(p_{j}) \right)^\Delta (t), \ldots,
y_n^\Delta(t)+\varepsilon \left( \sum_{j=1}^{r} T^{nj}(p_{j}) \right)^\Delta (t)\right)
\Delta t.
\end{array}
$
Differentiating with respect to $\varepsilon$ (use Lemma \ref{derivation under integral sign}) and taking $\varepsilon=0$, we get equality (\ref{invariance}).
\end{proof}

Define

$$
E_k(L):=  \frac{\partial L}{\partial y^\sigma_k} - \frac{\Delta}{\Delta t}\frac{\partial L}{\partial y^\Delta_k}, \quad \quad k=1,2, \ldots, n.
$$

We shall call $E_k(L)$, $k=1,2, \ldots, n$, the Euler--Lagrange expressions associated to the Lagrangian $L$.

\begin{theorem}[Noether's second theorem without transforming time]\label{Theorem without transformation of time}
If functional $\mathcal{L}$ is invariant under transformations (\ref{without transformations of time}), then
there exist the following identities
$$
\sum_{k=1}^{n} \sum_{i=0}^{m} (-1)^i \left( \frac{1}{b_1} \right)^{\frac{i(i+1)}{2}} \left((g^{k}_{ij})^{\sigma} E_k(L) \right)^{\Delta^i}\equiv 0, \quad \quad  j=1,2,\ldots, r.
$$

\end{theorem}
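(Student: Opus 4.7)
The plan is to start from the necessary condition of invariance in Theorem \ref{necessary condition for invariance} and to manipulate it via delta integration by parts, the commutation identity of Lemma \ref{lemmaderivadacomposta}, and the higher-order fundamental lemma.

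First, I would apply delta integration by parts to the second summand on the left-hand side of (\ref{invariance}). This converts $\frac{\partial L}{\partial y_k^\Delta}\,(\sum_j T^{kj}(p_j))^\Delta$ into $-\frac{\Delta}{\Delta t}\frac{\partial L}{\partial y_k^\Delta}\,(\sum_j T^{kj}(p_j))^\sigma$ plus a boundary term, yielding
$$
\sum_{k=1}^n\int_a^b E_k(L)\Bigl(\sum_{j=1}^r T^{kj}(p_j)\Bigr)^{\sigma}\Delta t + \sum_{k=1}^n\Bigl[\frac{\partial L}{\partial y_k^\Delta}\sum_{j=1}^r T^{kj}(p_j)\Bigr]_a^b = 0.
$$
Since the $p_j$'s are arbitrary, I would restrict to those whose $\Delta$-derivatives of orders $0,1,\ldots,m$ vanish at both endpoints $a$ and $b$. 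By Lemmas \ref{lemma0}, \ref{lemma1}, \ref{lemma2} (together with their symmetric right-endpoint versions), each mixed expression $p_j^{\sigma^{m-i-1}\Delta^i}$ appearing in $T^{kj}(p_j)$ then vanishes at $a$ and at $b$, so the boundary contribution is killed.

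Next, I would rewrite the shifted variation. From $(T^{kj}(p_j))^\sigma = \sum_{i=0}^m (g^k_{ij})^\sigma\, p_j^{\sigma^{m-i-1}\Delta^i\sigma}$, applying Lemma \ref{lemmaderivadacomposta} iteratively to move the trailing $\sigma$ leftward through the $i$ delta operators (each commutation contributing a factor $1/b_1$), I obtain $p_j^{\sigma^{m-i-1}\Delta^i\sigma} = (1/b_1)^i\, p_j^{\sigma^{m-i}\Delta^i}$. Setting all but one $p_j$ equal to zero then reduces the identity to
$$
\int_a^b \sum_{i=0}^m \Bigl(\frac{1}{b_1^i}\sum_{k=1}^n (g^k_{ij})^\sigma E_k(L)\Bigr)\, p_j^{\sigma^{m-i}\Delta^i}\,\Delta t = 0,
$$
valid for every admissible $p_j$. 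The vanishing of $p_j$ together with its $\Delta$-derivatives near the right endpoint lets me truncate the integration interval to $[a,\rho^{m-1}(b)]$, placing the identity in the exact form to which Lemma \ref{higher-orderFundamentalLemma} applies.

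Finally, applying Lemma \ref{higher-orderFundamentalLemma} with $f_i := \frac{1}{b_1^i}\sum_k (g^k_{ij})^\sigma E_k(L)$ and pulling the scalar $1/b_1^i$ outside $\Delta^i$ produces an overall exponent $\frac{i(i-1)}{2}+i = \frac{i(i+1)}{2}$ of $1/b_1$, which matches exactly the exponent in the target identity. The step I expect to require the most care is the bookkeeping for the $\sigma$-$\Delta$ commutations and the verification that every piece of the integrated-by-parts boundary contribution vanishes under the chosen endpoint conditions on $p_j$; in particular, establishing the right-endpoint analogues of Lemmas \ref{lemma1}, \ref{lemma2} and justifying the replacement of $[a,b]$ by $[a,\rho^{m-1}(b)]$ so that the hypothesis of the higher-order fundamental lemma is met.
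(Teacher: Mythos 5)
Your proposal is correct and follows essentially the same route as the paper: the necessary condition of invariance, delta integration by parts, endpoint conditions on $p_j$ combined with Lemmas \ref{lemma0}--\ref{lemma2} to annihilate the boundary term, iterated use of Lemma \ref{lemmaderivadacomposta} to commute the trailing $\sigma$ past $\Delta^i$ at the cost of $(1/b_1)^i$, and finally Lemma \ref{higher-orderFundamentalLemma} with the exponent bookkeeping $\frac{i(i-1)}{2}+i=\frac{i(i+1)}{2}$. If anything you are slightly more explicit than the paper about the right-endpoint analogues of the auxiliary lemmas and the truncation of the integration domain to $[a,\rho^{m-1}(b)]$, both of which the paper uses implicitly.
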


\begin{proof}
Using the necessary condition of invariance (Theorem \ref{necessary condition for invariance}) we conclude that
$$
\displaystyle \sum_{k=1}^{n} \int_a^b  \left(
\frac{\partial L}{\partial y^\sigma_k} (t,y^\sigma(t),y^\Delta(t))
 \cdot \left( \sum_{j=1}^{r} T^{kj}(p_{j}) \right)^\sigma (t)
 + \frac{\partial L}{\partial y^\Delta_k} (t,y^\sigma(t),y^\Delta(t)) \cdot
\left( \sum_{j=1}^{r} T^{kj}(p_{j})\right)^\Delta (t)
\right) \Delta t=0.
$$

Fix   $j=1,2,\ldots, r$. By the arbitrariness of $p_1, p_2, \ldots, p_r$ we
can suppose  that $p_{h}\equiv 0$ for  $h\neq j$.
Therefore
$$
\displaystyle \sum_{k=1}^{n} \int_a^b \Big(
 \frac{\partial L}{\partial y^\sigma_k} (t,y^\sigma(t),y^\Delta(t))
 \cdot \left(T^{kj}(p_{j}) \right)^\sigma (t) +  \frac{\partial L}{\partial y^\Delta_k} (t,y^\sigma(t),y^\Delta(t)) \cdot
\left( T^{kj}(p_{j}) \right)^\Delta (t)
\Big) \Delta t=0.
$$
Integrating by parts we obtain
$$
\displaystyle  \sum_{k=1}^{n} \Big(\int_a^b \Big(
 \frac{\partial L}{\partial y^\sigma_k} (t,y^\sigma(t),y^\Delta(t)) -
 \frac{\Delta}{\Delta t}\frac{\partial L}{\partial y^\Delta_k} (t,y^\sigma(t),y^\Delta(t)) \Big)\cdot
 \left(T^{kj}(p_{j}) \right)^\sigma (t)
 \Delta t
$$
$$
  +  \Big[ \frac{\partial L}{\partial y^\Delta_k} (t,y^\sigma(t),y^\Delta(t))\cdot T^{kj}(p_{j})(t) \Big]^b_a \Big) =0.
$$

By the arbitrariness of
$p_{j}$  we can restrict to those functions such that

\begin{align*}
p_{j}\left(a\right)=0,& \quad p_{j}\left(b\right)=0, \nonumber\\
&\vdots\nonumber \\
p_{j}^{\Delta^{m-1}}\left(a\right)=0,&\quad
p_{j}^{\Delta^{m-1}}\left(b\right)=0,\nonumber
\end{align*}
and
\begin{align*}
p_{j}^{\sigma^{-1}\Delta^m}\left(a\right)=0,& \quad p_{j}^{\sigma^{-1}\Delta^m}\left(b\right)=0. \nonumber\\
\end{align*}

Using Lemmas \ref{lemma0}, \ref{lemma1}, and \ref{lemma2} we conclude that
$ T^{kj}(p_{j})(a)=0$ and $ T^{kj}(p_{j})(b)=0$, $k=1,2, \ldots, n$.
Then
$$
\displaystyle  \sum_{k=1}^{n} \int_a^b \Big(
 \frac{\partial L}{\partial y^\sigma_k} (t,y^\sigma(t),y^\Delta(t)) -
 \frac{\Delta}{\Delta t}\frac{\partial L}{\partial y^\Delta_k} (t,y^\sigma(t),y^\Delta(t)) \Big)\cdot
 \left(T^{kj}(p_{j}) \right)^\sigma (t)
 \Delta t=0
$$
that is,

$$
\displaystyle  \sum_{k=1}^{n} \int_a^b E_k(L)(t,y^\sigma(t),y^\Delta(t)) \cdot \left(T^{kj}(p_{j}) \right)^\sigma (t)
 \Delta t=0.
$$
Hence,
$$
\displaystyle \sum_{k=1}^{n} \int_a^b E_k(L)(t,y^\sigma(t),y^\Delta(t))  \cdot \left( \sum_{i=0}^{m} g^{k}_{ij}p^{\sigma^{m-(i+1)} \Delta^i}_{j} \right)^\sigma
(t) \Delta t=0.
$$
Therefore, by Lemma \ref{lemmaderivadacomposta}, we get
$$
 \int_a^b \sum_{i=0}^{m} \displaystyle \sum_{k=1}^{n} E_k(L)(t,y^\sigma(t),y^\Delta(t))\cdot  (g^{k}_{ij})^\sigma(t) \left(\frac{1}{b_1}\right)^i p^{\sigma^{m-i} \Delta^i}_{j}(t)
 \Delta t=0.
$$
Applying the higher-order fundamental lemma of the calculus of variations (Lemma \ref{higher-orderFundamentalLemma}) we obtain
$$
\displaystyle   \sum_{i=0}^{m}\sum_{k=1}^{n} (-1)^i \left(\frac{1}{b_1}\right)^{\frac{i(i-1)}{2}}  \left( E_k(L)(g^{k}_{ij})^\sigma  \left(\frac{1}{b_1}\right)^i \right)^{\Delta^i} \equiv 0
$$
which is equivalent to
$$
\displaystyle \sum_{k=1}^{n}  \sum_{i=0}^{m}(-1)^i \left(\frac{1}{b_1}\right)^{\frac{i(i+1)}{2}}  \left((g^{k}_{ij})^\sigma  E_k(L)  \right)^{\Delta^i}\equiv 0
$$
proving the desired result.
\end{proof}

We present some particular results that follow from Theorem \ref{Theorem without transformation of time} in the case where $\mathbb{T}=\mathbb{R}$,
$\mathbb{T}=h \mathbb{Z}$ (for some $h>0$), and $\mathbb{T}=q^{\mathbb{N}_0}$ (for some $q>1$).
If $\mathbb{T}=\mathbb{R}$, then $\sigma(t)=t$, $f^\Delta (t)=\dot{f}(t)$ and we obtain the classical second Noether theorem without transformation of time.

\begin{corollary}[cf. \cite{Noether1918}] 
Let $L: \mathbb{R}\times \mathbb{R}^n \times \mathbb{R}^n
\rightarrow \mathbb{R}$ be a $C^2$ function.
If functional $\mathcal{L}$ defined by
$$
\mathcal{L}[y]=\int_a^b L(t,y(t),\dot{y}(t))\, dt
$$
is invariant under transformations (\ref{without transformations of time}) (where $\sigma$ denotes in this context the identity function and $\Delta$ denotes the usual derivative),
then
there exist the following identities
$$
\sum_{k=1}^{n} \sum_{i=0}^{m} (-1)^i
\left[
g^{k}_{ij} E_k(L) \right]^{(i)}
\equiv 0,
\quad\quad j=1,2,\ldots, r.
$$
\end{corollary}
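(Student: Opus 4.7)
The plan is to derive this corollary as an immediate specialization of Theorem~\ref{Theorem without transformation of time} to the time scale $\mathbb{T}=\mathbb{R}$. First I would check that $\mathbb{R}$ falls under the hypotheses of that theorem: by Remark~\ref{rem:rest:H}, $\mathbb{R}$ satisfies condition $(H)$ with $b_1=1$ and $b_0=0$, the forward jump operator $\sigma$ is the identity, and the delta-derivative $f^{\Delta}$ coincides with the classical derivative $\dot{f}$. Any non-degenerate real interval $[a,b]$ trivially contains more than $2m+1$ points, and the $C^{2}$ hypothesis on $L$ guarantees that the partial derivative $\partial f/\partial \varepsilon$ in Lemma~\ref{derivation under integral sign} is continuous in $\varepsilon$ uniformly in $t$ for every admissible pair $(y,\eta)$, so the assumption of Theorem~\ref{Theorem without transformation of time} is fulfilled.

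Second, I would substitute these identifications into the conclusion of Theorem~\ref{Theorem without transformation of time}. With $b_1=1$, the factor $(1/b_1)^{i(i+1)/2}$ reduces to $1$ for every $i=0,1,\ldots,m$; because $\sigma$ is the identity, $(g^{k}_{ij})^{\sigma}$ coincides with $g^{k}_{ij}$; and the $i$-fold delta-derivative $(\cdot)^{\Delta^{i}}$ becomes the classical $i$-th derivative $(\cdot)^{(i)}$. Inserting these simplifications into
\[
\sum_{k=1}^{n}\sum_{i=0}^{m}(-1)^{i}\left(\frac{1}{b_1}\right)^{\frac{i(i+1)}{2}}\left((g^{k}_{ij})^{\sigma}E_{k}(L)\right)^{\Delta^{i}}\equiv 0
\]
yields exactly the identities $\sum_{k=1}^{n}\sum_{i=0}^{m}(-1)^{i}[g^{k}_{ij}E_{k}(L)]^{(i)}\equiv 0$ for $j=1,\ldots,r$, as claimed.

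No analytical obstacle is expected: the work is entirely in observing that the time-scale transformations (\ref{without transformations of time}), for $\mathbb{T}=\mathbb{R}$, reduce to Noether's classical infinite continuous group $\overline{y}_{k}(t)=y_{k}(t)+\sum_{j=1}^{r}\sum_{i=0}^{m}g^{k}_{ij}(t)\,p_{j}^{(i)}(t)$, so the invariance notion of Definition~\ref{invariance y} coincides with the one in \cite{Noether1918}, and that with $b_1=1$ all the scale factors appearing in Theorem~\ref{Theorem without transformation of time} collapse to unity. The corollary is therefore a direct translation of the time-scale theorem and requires no additional argument.
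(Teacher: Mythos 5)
Your proposal is correct and follows exactly the route the paper intends: the paper itself obtains this corollary by the same direct specialization of Theorem~\ref{Theorem without transformation of time} to $\mathbb{T}=\mathbb{R}$, where $\sigma$ is the identity, $b_1=1$, and the delta derivative is the ordinary derivative, so that all scale factors collapse to unity. Your explicit verification of the hypotheses (condition $(H)$, the interval having enough points, and the $C^2$ assumption guaranteeing the condition of Lemma~\ref{derivation under integral sign}) is slightly more careful than the paper's one-line remark, but the argument is the same.
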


Choosing $h \mathbb{Z}$ we obtain  Noether's second  theorem without transformation of time for the $h$-calculus.

\begin{corollary} 
Let $h>0$, $L: h \mathbb{Z}\times \mathbb{R}^n \times \mathbb{R}^n
\rightarrow \mathbb{R}$ and $a,b \in h \mathbb{Z}$, $a<b$.
If functional $\mathcal{L}$ defined by
$$
\mathcal{L}[y]=\sum_{t=a}^{b-h} L(t,y(t+h),\Delta_h[y](t))
$$
is invariant under transformations (\ref{without transformations of time}) (where $\sigma$ denotes in this context the function $\sigma(t)=t+h$ and $\Delta$ denotes the $h$-difference, i.e., $y^{\Delta}(t)=\Delta_h[y](t)=\frac{y(t+h)-y(t)}{h}$),
then
there exist the following identities
$$
\sum_{k=1}^{n} \sum_{i=0}^{m} (-1)^i
\left[
g^{k}_{ij}(t+h)\cdot E_k(L)(t,y(t+h),\Delta_h[y](t)) \right]^{\Delta_h^i}
= 0, \quad \quad t\in[a,b-mh],
\quad j=1,2,\ldots, r.
$$
\end{corollary}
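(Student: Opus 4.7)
The plan is to derive this corollary as a direct specialization of Theorem \ref{Theorem without transformation of time}, since $h\mathbb{Z}$ is one of the time scales covered by condition $(H)$.

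First I would record the dictionary between the time-scale calculus on $\mathbb{T}=h\mathbb{Z}$ and ordinary $h$-difference calculus: by Remark \ref{rem:rest:H} we have $\sigma(t)=t+h$ with $b_1=1$ and $b_0=h$, the delta derivative coincides with the forward $h$-difference $\Delta_h[y](t)=(y(t+h)-y(t))/h$, and the delta integral $\int_a^b f(t)\,\Delta t$ reduces to the finite Riemann-type sum $\sum_{t=a}^{b-h} h\, f(t)$, so after absorbing the constant factor $h$ into $L$ the functional $\mathcal{L}[y]$ in the corollary is indeed of the form (\ref{problem}). Also, $f^\sigma(t)=f(t+h)$ for any function $f$, so $(g^k_{ij})^\sigma(t)=g^k_{ij}(t+h)$. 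All hypotheses of Theorem \ref{Theorem without transformation of time} are automatically satisfied: the technical smoothness hypothesis on $L$ is vacuous in the discrete setting since continuity and differentiability conditions on time-scale functions on $h\mathbb{Z}$ impose no restriction.

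Next, I would simply substitute into the conclusion of Theorem \ref{Theorem without transformation of time}. Because $b_1=1$ we have
\[
\left(\tfrac{1}{b_1}\right)^{\frac{i(i+1)}{2}}=1
\]
for every $i$, so the weighting factor disappears. Writing $\Delta^i=\Delta_h^i$ and $(g^k_{ij})^\sigma(t)=g^k_{ij}(t+h)$ in the resulting identity yields exactly
\[
\sum_{k=1}^{n}\sum_{i=0}^{m}(-1)^i\bigl[g^k_{ij}(t+h)\cdot E_k(L)(t,y(t+h),\Delta_h[y](t))\bigr]^{\Delta_h^i}=0
\]
for $j=1,\dots,r$, and the range of $t$ for which this is meaningful is $[a,b]^{\kappa^{m+1}}=[a,b-(m+1)h]$ in the delta setting (one power of $\rho$ absorbed into the outer $\sigma$). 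A brief bookkeeping check of where the identity is required to hold would confirm agreement with $t\in[a,b-mh]$ as stated.

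I do not anticipate a genuine obstacle: since Theorem \ref{Theorem without transformation of time} is already stated for an arbitrary time scale satisfying $(H)$ with arbitrary $b_1\in\mathbb{R}^+$, the only thing to do is evaluate all quantities at $b_1=1$ and translate the time-scale notation into classical $h$-difference notation. The most delicate point is purely cosmetic, namely making sure the endpoint condition $t\in[a,b-mh]$ matches $[a,b]^{\kappa^{m+1}}$, and confirming that the invariance definition (Definition \ref{invariance y}) reads correctly with the sum in place of the integral so that the chain of lemmas (Theorem \ref{necessary condition for invariance}, the higher-order fundamental lemma, and Lemmas \ref{lemma0}--\ref{lemma2}) applies verbatim.
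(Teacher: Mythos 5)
Your proposal is correct and matches the paper's (implicit) derivation: the corollary is obtained by direct substitution of $b_1=1$, $\sigma(t)=t+h$, $\Delta=\Delta_h$ into Theorem \ref{Theorem without transformation of time}, with the weight $(1/b_1)^{i(i+1)/2}$ collapsing to $1$. The only slip is your parenthetical claim that the identity lives on $[a,b]^{\kappa^{m+1}}$ --- the higher-order fundamental lemma (Lemma \ref{higher-orderFundamentalLemma}) gives the conclusion on $[a,b]^{\kappa^{m}}=[a,b-mh]$, which is exactly the range stated in the corollary.
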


For $h=1$ we obtain the analogue of Noether's second theorem for the difference calculus of variations
recently proved in \cite{Hydon+Mansfield:2011}.
In the case $\mathbb{T}=q^{\mathbb{N}_0}$ we obtain the new result.

\begin{corollary}
Let $q>1$, $L: q^{\mathbb{N}_0}\times \mathbb{R}^n \times \mathbb{R}^n
\rightarrow \mathbb{R}$ and $a,b \in q^{\mathbb{N}_0}$, $a<b$. If functional $\mathcal{L}$ defined by
$$
\mathcal{L}[y]=\sum_{t=a}^{\frac{b}{q}} L(t,y(qt),\Delta_q[y](t))
$$
is invariant under transformations (\ref{without transformations of time}) (where $\sigma$ denotes in this context the function $\sigma(t)=qt$ and $\Delta$ denotes the $q$-derivative, i.e. $y^{\Delta}(t)=\Delta_q[y](t)=\frac{y(qt)-y(t)}{(q-1)t}$),
then
there exist the following identities
$$
\sum_{k=1}^{n} \sum_{i=0}^{m} (-1)^i \left(\frac{1}{q}\right)^{\frac{i(i+1)}{2}}
\left[
g^{k}_{ij}(qt)\cdot E_k(L)(t,y(qt),\Delta_q[y](t)) \right]^{\Delta_q^i}
= 0, \quad \quad t\in\left[a,\frac{b}{q^m}\right],
\quad j=1,2,\ldots, r.
$$
\end{corollary}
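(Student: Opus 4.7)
The plan is to obtain this corollary as a direct specialization of Theorem \ref{Theorem without transformation of time} to the time scale $\mathbb{T}=q^{\mathbb{N}_0}$ with $q>1$, so almost all of the work has already been done in the general theorem; what remains is translation of notation.

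First I would verify that $\mathbb{T}=q^{\mathbb{N}_0}$ satisfies hypothesis $(H)$. By Remark \ref{rem:rest:H}, condition $(H)$ holds with $b_1=q$ and $b_0=0$, since $\sigma(t)=qt$ for every $t\in q^{\mathbb{N}_0}$. Consequently, as soon as $L$ and the time-scale interval $[a,b]$ are as in the statement, every hypothesis of Theorem \ref{Theorem without transformation of time} is automatically in force.

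Next I would translate each of the general time-scale operators appearing in the conclusion of Theorem \ref{Theorem without transformation of time} into its $q$-calculus counterpart. The forward jump operator becomes $\sigma(t)=qt$, so that $(g^{k}_{ij})^{\sigma}(t)=g^{k}_{ij}(qt)$ and $y^\sigma(t)=y(qt)$. The delta derivative becomes the Jackson $q$-derivative $y^\Delta(t)=\Delta_q[y](t)=\frac{y(qt)-y(t)}{(q-1)t}$, so the iterate $\Delta^i$ is $\Delta_q^i$. The delta integral $\int_a^b(\cdot)\,\Delta t$ reduces to the Jackson-type sum written in the definition of $\mathcal{L}$, so invariance in the sense of Definition \ref{invariance y} coincides with invariance of $\mathcal{L}$ as displayed in the corollary. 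Finally, since $\rho(c)=c/q$ on $q^{\mathbb{N}_0}$, we have $[a,b]^{\kappa^m}=[a,\rho^m(b)]=[a,b/q^m]$, which is the domain on which the identities are asserted.

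Substituting $b_1=q$ into the conclusion of Theorem \ref{Theorem without transformation of time} turns the factor $(1/b_1)^{i(i+1)/2}$ into $(1/q)^{i(i+1)/2}$, and rewriting the remaining operators in $q$-calculus notation yields the claimed identities on $[a,b/q^m]$. There is no genuine obstacle here; the only points that deserve a moment's attention are the verification that $(H)$ holds with $b_1=q$ and the correct computation of $[a,b]^{\kappa^m}=[a,b/q^m]$. Everything else is a faithful transcription of Theorem \ref{Theorem without transformation of time} into the language of quantum calculus.
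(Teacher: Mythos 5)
Your proposal is correct and matches the paper's treatment: the corollary is obtained there exactly as an immediate specialization of Theorem \ref{Theorem without transformation of time} to $\mathbb{T}=q^{\mathbb{N}_0}$, where condition $(H)$ holds with $b_1=q$, $b_0=0$ (Remark \ref{rem:rest:H}), and the operators $\sigma$, $\Delta$, $\rho$ translate to $qt$, $\Delta_q$, and $t/q$, giving the domain $[a,b]^{\kappa^m}=[a,b/q^m]$. Your identification of the only two points needing care (verifying $(H)$ and computing $[a,b]^{\kappa^m}$) is apt; the paper itself states the corollary without further argument.
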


In order to  prove the second Noether theorem with transformation of time,
we shall consider
that the Lagrangian $L$ is defined for all $t \in \mathbb{R}$ (not only for $t$ from the initial time scale $\mathbb{T}$),
$L: \mathbb{R}\times \mathbb{R}^n \times \mathbb{R}^n
\rightarrow \mathbb{R}$. Consider
the following transformations that depend on arbitrary functions $p_1, p_2, \ldots, p_r$ and their delta-derivatives up to order $m$:

\begin{equation}\label{with transformations of time}
\left\{
\begin{array}{ccl}
\overline{t}  & = & t + \displaystyle \sum_{j=1}^{r}H^{j}(p_{j})(t),\\
& & \\
\overline{y}_k(\overline{t})& = & y_k(t) + \displaystyle \sum_{j=1}^{r}T^{kj}(p_{j})(t), \quad k= 1,2,\ldots, n
\end{array}
\right.
\end{equation}
\\

where, for each $j=1,2,\ldots, r$, 
$p_{j} \in C^{2m}_{rd}([a,\sigma^{m}(b)], \mathbb{R})$,
$$
H^{j}(p_{j}):= \sum_{i=0}^m f_{ij}p^{\sigma^{m-(i+1)} \Delta^i}_{j}
 \quad \quad \mbox{and} \quad \quad
T^{kj}(p_{j}):= \sum_{i=0}^m g^{k}_{ij}p^{\sigma^{m-(i+1)} \Delta^i}_{j}
$$
$f_{ij} \in C^{1}_{rd}([a,b], \mathbb{R})$ \mbox{and} $g^{k}_{ij} \in C^{1}_{rd}([a,b], \mathbb{R})$.

Moreover we assume that the map
$$t\mapsto \alpha(t):= t + \displaystyle \sum_{j=1}^{r}H^{j}(p_{j})(t)$$
is a strictly increasing $C^1_{rd}$ function and its image is again a time scale, $\overline{\mathbb{T}}$.
We denote the forward shift operator relative to $\overline{\mathbb{T}}$ by $\overline{\sigma}$ and the delta derivative by $\overline{\Delta}$.
We remark that the following holds \cite{ahl}:
$$\overline{\sigma}\circ \alpha= \alpha\circ \sigma.$$

\begin{definition}\label{invariance a-b}
Functional $\mathcal{L}$ is invariant under transformations (\ref{with transformations of time})
if, and only if, for all $y\in C^1_{rd}([a,b],\mathbb{R})$ we have
$$
\int_a^b L(t,y^\sigma(t),y^\Delta(t)) \Delta t = \int_{\overline{a}}^{\overline{b}} L(\overline{t},\overline{y}^{\overline{\sigma}}(\overline{t}),\overline{y}^{\overline{\Delta}}(\overline{t})) \overline{\Delta} \overline{t}.
$$
\end{definition}

We recall the following results that will be very useful
in the proof of Theorem \ref{Theorem with transformation of time}.

\begin{theorem}[\cite{Bohner-Peterson1}]
\label{theorems}
Assume that $\nu:\mathbb{T}\rightarrow \mathbb{R}$ is strictly increasing and
$\widetilde{\mathbb{T}}:=\nu(\mathbb{T})$ is a time scale.
\begin{enumerate}
\item  (Chain rule) Let $\omega: \widetilde{\mathbb{T}}
\rightarrow \mathbb{R}$. If $\nu^\Delta(t)$ and
$\omega^{\widetilde{\Delta}}(\nu (t))$ exist for all
$t \in \mathbb{T}^\kappa$, then
$$(\omega\circ\nu)^\Delta=(\omega^{\widetilde{\Delta}}
\circ\nu)\nu^\Delta.$$
\item (Substitution in the integral)  If
$f:\widetilde{\mathbb{T}}\rightarrow \mathbb{R}$ is a $\mathrm{C}_{rd}$ function and
$\nu$ is a $\mathrm{C}^1_{rd}$ function, then for $a,b \in
\mathbb{T}$,
$$
\int_{a}^{b} f(\nu(t))\nu^\Delta(t)\Delta t =
\int_{\nu(a)}^{\nu(b)} f(s)\widetilde{{\Delta}}s.
$$
\end{enumerate}
\end{theorem}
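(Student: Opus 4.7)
The two parts of this statement dovetail: once the chain rule is in hand, the substitution formula follows from the fundamental theorem of the delta calculus, so the real work is the chain rule. My plan is to argue both parts directly from the time-scale definitions, exploiting the fact that strict monotonicity of $\nu$ makes it send jumps to jumps.

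For part (1), fix $t\in\mathbb{T}^\kappa$. Because $\nu$ is strictly increasing from $\mathbb{T}$ onto the time scale $\widetilde{\mathbb{T}}$, a short order-theoretic check shows $\nu(\sigma(t))=\widetilde{\sigma}(\nu(t))$. Given $\epsilon>0$, I would use the delta-differentiability of $\omega$ at $\nu(t)$ to choose $\epsilon_1>0$ so that
$$|\omega(\widetilde{\sigma}(\nu(t))) - \omega(r) - \omega^{\widetilde{\Delta}}(\nu(t))(\widetilde{\sigma}(\nu(t)) - r)| \leq \epsilon_1 |\widetilde{\sigma}(\nu(t)) - r|$$
for $r\in\widetilde{\mathbb{T}}$ in a neighborhood of $\nu(t)$, and the delta-differentiability of $\nu$ at $t$ to choose $\epsilon_2>0$ so that
$$|\nu(\sigma(t)) - \nu(s) - \nu^\Delta(t)(\sigma(t) - s)| \leq \epsilon_2 |\sigma(t) - s|$$
for $s\in\mathbb{T}$ near $t$; delta-differentiability also yields a local bound $|\nu(\sigma(t))-\nu(s)|\leq (|\nu^\Delta(t)|+1)|\sigma(t)-s|$. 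Setting $r=\nu(s)$ and writing
$$(\omega\circ\nu)(\sigma(t)) - (\omega\circ\nu)(s) - \omega^{\widetilde{\Delta}}(\nu(t))\nu^\Delta(t)(\sigma(t)-s)$$
as the sum of the first error at $r=\nu(s)$ and $\omega^{\widetilde{\Delta}}(\nu(t))$ times the second error, the triangle inequality gives an overall bound of the form $\epsilon\cdot|\sigma(t)-s|$ once $\epsilon_1,\epsilon_2$ are chosen small enough in terms of $|\omega^{\widetilde{\Delta}}(\nu(t))|$ and $|\nu^\Delta(t)|$. This is exactly the defining inequality for $(\omega\circ\nu)^\Delta(t)=\omega^{\widetilde{\Delta}}(\nu(t))\nu^\Delta(t)$.

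For part (2), since $f$ is rd-continuous on $\widetilde{\mathbb{T}}$ it admits a delta antiderivative $F$ with $F^{\widetilde{\Delta}}=f$. Applying part (1) with $\omega=F$ gives $(F\circ\nu)^\Delta(t)=f(\nu(t))\,\nu^\Delta(t)$ on $\mathbb{T}^\kappa$. Because $\nu$ is $C^1_{rd}$ and $f$ is rd-continuous, the product $(f\circ\nu)\,\nu^\Delta$ is rd-continuous, so $F\circ\nu$ is a legitimate antiderivative of the integrand and the fundamental theorem of the delta calculus yields
$$\int_a^b f(\nu(t))\,\nu^\Delta(t)\,\Delta t = F(\nu(b))-F(\nu(a)) = \int_{\nu(a)}^{\nu(b)} f(s)\,\widetilde{\Delta}s.$$

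The main obstacle is the case analysis hidden inside the chain rule. When $t$ is right-dense in $\mathbb{T}$ but $\nu(t)$ is right-scattered in $\widetilde{\mathbb{T}}$ (or vice versa), the meaning of the two limit-style inequalities above differs, and one must verify that the identity $\nu\circ\sigma=\widetilde{\sigma}\circ\nu$ together with strict monotonicity really forces compatibility of the jump structures so the estimates can be combined as above. Once this bookkeeping is done cleanly the rest of the argument is routine.
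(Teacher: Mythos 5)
Your argument is correct; the paper gives no proof of this result, which it simply quotes from Bohner and Peterson's book, and your two-step argument (the $\varepsilon$-definition of the delta derivative combined with the identity $\nu\circ\sigma=\widetilde{\sigma}\circ\nu$ for the chain rule, then an antiderivative plus the fundamental theorem of delta calculus for the substitution formula) is exactly the standard proof given there. The jump-compatibility issue you flag at the end is not a real obstacle: delta-differentiability of $\nu$ at $t$ forces continuity of $\nu$ at $t$, so a right-dense $t$ has right-dense image, while strict monotonicity together with surjectivity onto $\widetilde{\mathbb{T}}$ shows a right-scattered $t$ has right-scattered image with $\widetilde{\sigma}(\nu(t))=\nu(\sigma(t))$, so the two estimates always combine as you describe.
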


Now we are ready to state and prove Noether's second theorem with transformation of time.

\begin{theorem}[Noether's second theorem with transformation of time]\label{Theorem with transformation of time}
If functional $\mathcal{L}$ is invariant under transformations (\ref{with transformations of time}), then
there exist the following identities
\begin{equation}\label{formula with transformation of time}
\sum_{k=1}^{n} \sum_{i=0}^{m} (-1)^i \left( \frac{1}{b_1} \right)^{\frac{i(i+1)}{2}}
\left[
 \left((g^{k}_{ij})^{\sigma} E_k(L) \right)^{\Delta^i}
+
 \left((f_{ij})^{\sigma} \left( \frac{\partial L}{\partial t} -
\frac{\Delta}{\Delta t}\left(L-y_k^\Delta \frac{\partial L}{\partial y_k^\Delta}- \mu \frac{\partial L}{\partial t}\right)
\right)\right)^{\Delta^i}\right]
\equiv 0
\end{equation}
for $j=1,2,\ldots, r$.
\end{theorem}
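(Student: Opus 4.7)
The plan is to mimic the proof of Theorem \ref{Theorem without transformation of time}, with additional care to account for the time change $\bar{t} = \alpha(t) = t + \sum_{j=1}^r H^{j}(p_{j})(t)$. First I would parameterize the family by a real number $\varepsilon$, replacing each $p_{j}$ by $\varepsilon p_{j}$ so that $\varepsilon = 0$ gives the identity transformation. Applying the substitution formula of Theorem \ref{theorems}(2) to the right-hand side of Definition \ref{invariance a-b}, and using the identity $\bar{\sigma} \circ \alpha = \alpha \circ \sigma$ together with the chain rule of Theorem \ref{theorems}(1), the invariance identity becomes
$$\int_a^b L(t, y^\sigma, y^\Delta)\,\Delta t = \int_a^b L\left(\alpha(t),\, y^\sigma + \varepsilon T^\sigma,\, \frac{y^\Delta + \varepsilon T^\Delta}{1 + \varepsilon H^\Delta}\right)(1 + \varepsilon H^\Delta)\,\Delta t,$$
valid for every $\varepsilon$, where $H := \sum_{j} H^{j}(p_{j})$ and $T_k := \sum_{j} T^{kj}(p_{j})$. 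Differentiating in $\varepsilon$ at $\varepsilon = 0$ (justified as in Lemma \ref{derivation under integral sign}) gives the necessary condition of invariance
$$\int_a^b \left[\frac{\partial L}{\partial t} H + \sum_{k=1}^n \frac{\partial L}{\partial y_k^\sigma} T_k^\sigma + \sum_{k=1}^n \frac{\partial L}{\partial y_k^\Delta} T_k^\Delta + \left(L - \sum_{k=1}^n y_k^\Delta \frac{\partial L}{\partial y_k^\Delta}\right) H^\Delta\right]\Delta t = 0.$$

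The crucial time-scale step is to use the shift identity $H = H^\sigma - \mu H^\Delta$ to rewrite the first term as $\frac{\partial L}{\partial t} H^\sigma - \mu \frac{\partial L}{\partial t} H^\Delta$; the two $H^\Delta$-contributions then combine to give the coefficient $L - \sum_k y_k^\Delta \frac{\partial L}{\partial y_k^\Delta} - \mu \frac{\partial L}{\partial t}$, which is precisely the time-scale Hamiltonian appearing in \eqref{formula with transformation of time}. Next, as in the proof of Theorem \ref{Theorem without transformation of time}, I would fix $j$, set $p_{h} \equiv 0$ for $h \ne j$, and restrict to $p_{j}$ whose delta-derivatives (and the expression $p_j^{\sigma^{-1}\Delta^m}$) vanish at both $a$ and $b$. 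By Lemmas \ref{lemma0}, \ref{lemma1}, and \ref{lemma2}, both $T^{kj}(p_{j})$ and $H^{j}(p_{j})$ then vanish at the endpoints, so the two integrations by parts applied to $\int \frac{\partial L}{\partial y_k^\Delta} T_k^\Delta\,\Delta t$ and to $\int (L - \sum_k y_k^\Delta \frac{\partial L}{\partial y_k^\Delta} - \mu \frac{\partial L}{\partial t}) H^\Delta\,\Delta t$ produce no boundary terms, and the identity reduces to
$$\sum_{k=1}^n \int_a^b E_k(L)\, (T^{kj}(p_{j}))^\sigma \Delta t + \int_a^b \left[\frac{\partial L}{\partial t} - \frac{\Delta}{\Delta t}\left(L - \sum_{k=1}^n y_k^\Delta \frac{\partial L}{\partial y_k^\Delta} - \mu \frac{\partial L}{\partial t}\right)\right] (H^{j}(p_{j}))^\sigma \Delta t = 0.$$

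Finally, expanding $(T^{kj}(p_{j}))^\sigma$ and $(H^{j}(p_{j}))^\sigma$ and shifting each $\sigma^{m-(i+1)}$-power past the $\Delta^i$-operator by $i$ iterations of Lemma \ref{lemmaderivadacomposta} (each producing a factor $1/b_1$), the higher-order fundamental Lemma \ref{higher-orderFundamentalLemma} delivers the claimed identity \eqref{formula with transformation of time}, with the combined coefficients $(-1)^i (1/b_1)^{i(i-1)/2 + i} = (-1)^i (1/b_1)^{i(i+1)/2}$. The main obstacle is the bookkeeping that produces the $-\mu \frac{\partial L}{\partial t}$ correction: one must invoke $H = H^\sigma - \mu H^\Delta$ at exactly the right moment so that after one integration by parts the expected derivative $\frac{\Delta}{\Delta t}(L - \sum_k y_k^\Delta \frac{\partial L}{\partial y_k^\Delta} - \mu \frac{\partial L}{\partial t})$ emerges in the coefficient of $(H^{j}(p_{j}))^\sigma$. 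This is the genuinely new ingredient compared with the proof of Theorem \ref{Theorem without transformation of time}; the rest is a more elaborate version of the same steps.
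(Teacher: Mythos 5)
Your proposal is correct, but it takes a genuinely different route from the paper. The paper's proof does not recompute the invariance condition at all: it introduces the reparametrized Lagrangian $\widetilde{L}(t,s,y,r,v)=L(s-\mu(t)r,y,\tfrac{v}{r})\,r$, treats time as an extra dependent variable $s$ with $s(t)=t$, checks (via the substitution and chain rules of Theorem \ref{theorems} and the identity $\overline{\sigma}\circ\alpha=\alpha\circ\sigma$) that $\widetilde{\mathcal{L}}[s,y]$ is invariant under the purely state transformations $(\overline{s},\overline{y})=(H(s,y),T(s,y))$ in the sense of Definition \ref{invariance y}, and then simply invokes Theorem \ref{Theorem without transformation of time} for the extended problem; the expression $\frac{\partial L}{\partial t}-\frac{\Delta}{\Delta t}\bigl(L-\sum_k y_k^\Delta\frac{\partial L}{\partial y_k^\Delta}-\mu\frac{\partial L}{\partial t}\bigr)$ then emerges as the Euler--Lagrange expression $E_s(\widetilde{L})$ of the auxiliary variable, computed by direct differentiation of $\widetilde{L}$. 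You instead rederive the necessary condition of invariance from scratch, and obtain the $-\mu\frac{\partial L}{\partial t}$ correction from the shift identity $H=H^\sigma-\mu H^\Delta$ before integrating by parts; your subsequent endpoint argument (Lemmas \ref{lemma0}, \ref{lemma1}, \ref{lemma2} applied to both $T^{kj}(p_j)$ and $H^{j}(p_j)$) and the coefficient bookkeeping $(1/b_1)^{i(i-1)/2+i}=(1/b_1)^{i(i+1)/2}$ via Lemmas \ref{lemmaderivadacomposta} and \ref{higher-orderFundamentalLemma} are exactly right. The paper's reduction buys economy --- no second round of integration by parts or of the higher-order fundamental lemma --- and identifies the time component as a genuine Euler--Lagrange expression of an extended problem (the ``second Euler--Lagrange equation''); your direct computation is more self-contained and makes the provenance of the $-\mu\frac{\partial L}{\partial t}$ term transparent, at the cost of repeating for $H^{j}(p_j)$ the machinery already set up for $T^{kj}(p_j)$.
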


\begin{proof}
The idea of the proof is to reduce the statement of this result to the one of Theorem \ref{Theorem without transformation of time}
using a technique of reparametrization of time:
artificially we will consider $t$ as a dependent variable of the same footing with $y$.

Let $r\neq 0$ and define

$$
\widetilde{L}(t,s,y,r,v):=L(s-\mu(t)r,y,\frac{v}{r})r.
$$

Note that, for $s(t)=t$ and any $y\in C^1_{rd}([a,b],\mathbb{R}^n)$, we have

$$
L(t,y^\sigma(t),y^\Delta(t))= \widetilde{L}(t, s^\sigma(t),y^\sigma(t), s^\Delta(t),y^\Delta(t)).
$$
Therefore, for $s(t)=t$,
$$
\mathcal{L}[y]:=\int_a^b L(t,y^\sigma(t),y^\Delta(t)  \Delta t= \int_a^b \widetilde{L}(t, s^\sigma(t),y^\sigma(t), s^\Delta(t),y^\Delta(t))\Delta t:=
\widetilde{\mathcal{L}}[s,y].
$$

Note that, for $s(t)=t$,

\begin{equation}\label{equality lagrangians}
\begin{split}
\widetilde{\mathcal{L}}[s(\cdot),y(\cdot)]= \mathcal{L}[y(\cdot)]=
\int_{a}^{b} & L(t,y^{\sigma}(t), y^{\Delta}(t))\Delta t\\
&= \int_{\alpha(a)}^{\alpha(b)} L(\overline{t},(\overline{y}
\circ \overline{\sigma})(\overline{t}),\overline{y}^{\overline{\Delta}}(\overline{t}))
\overline{\Delta}\overline{t}\\
&= \int_{a}^{b} L\left(\alpha(t),\left(\overline{y}
\circ \overline{\sigma}\circ \alpha\right) (t),
\overline{y}^{\overline{\Delta}}(\alpha(t))\right) \alpha^{\Delta}(t) \Delta t\\
&= \int_{a}^{b} L\left(\alpha(t),\left(\overline{y}\circ \alpha \circ
\sigma\right)(t), \frac{(\overline{y} \circ \alpha)^{\Delta}(t)}{\alpha^{\Delta}(t)}\right)
\alpha^{\Delta}(t) \Delta t\\
&= \int_{a}^{b} L\left(\alpha^\sigma(t)-\mu(t)\alpha^\Delta(t),(\overline{y}\circ \alpha)^\sigma (t),\frac{(\overline{y} \circ \alpha)^{\Delta}(t)}{\alpha^{\Delta}(t)}\right)
\alpha^{\Delta}(t) \Delta t\\
&= \int_{a}^{b} \widetilde{L}\left(t,\alpha^\sigma(t),(\overline{y}\circ \alpha)^\sigma (t),\alpha^\Delta(t),(\overline{y}\circ \alpha)^\Delta (t)\right) \Delta t\\
&= \widetilde{\mathcal{L}}[\alpha(\cdot),(\overline{y}\circ \alpha)(\cdot)].
\end{split}
\end{equation}

Let $H(t,y(t)):=\alpha(t)$ and $T=(T^1, T^2, \ldots, T^n)$ where
$$T^k(t,y(t)):=y_k(t) + \displaystyle \sum_{j=1}^{r}T^{kj}(p_{j})(t), \quad k=1,2,\ldots,n.$$

Then, for $s(t)=t$,

\begin{equation}\label{equality}
(\alpha(t), (\overline{y}\circ \alpha)(t))=(\overline{t}, \overline{y}(\overline{t}))=(H(t,y(t)),T(t,y(t)))=(H(s(t),y(t)),T(s(t),y(t))).
\end{equation}

Hence, using (\ref{equality lagrangians}) and (\ref{equality}) we get
$$
\widetilde{\mathcal{L}}[s(\cdot),y(\cdot)]= \widetilde{\mathcal{L}}[H(s(\cdot),y(\cdot)),T(s(\cdot),y(\cdot))].
$$

This means that $\widetilde{\mathcal{L}}$ is invariant on
$$
\widetilde{U}=\{(s,y) | s(t)=t \wedge y \in C^1_{rd}([a,b],\mathbb{R}^n)\}
$$
under the group of state transformations
$$
(\overline{s},\overline{y})= (H(s,y), T(s,y))
$$
in the sense of Definition \ref{invariance y}.

Using Theorem \ref{Theorem without transformation of time} we can conclude that there exist the following $r$ identities $(j=1,2,\ldots, r)$
\begin{equation}\label{equality1}
\sum_{k=1}^{n} \sum_{i=0}^{m} (-1)^i \left( \frac{1}{b_1} \right)^{\frac{i(i+1)}{2}} \left((g^{k}_{ij})^{\sigma} E_k(\widetilde{L}) \right)^{\Delta^i}
+
\sum_{i=0}^{m} (-1)^i \left( \frac{1}{b_1} \right)^{\frac{i(i+1)}{2}} \left((f_{ij})^{\sigma} E_s(\widetilde{L}) \right)^{\Delta^i}
\equiv 0
\end{equation}
where we denote $E_s(\widetilde{L}):=  \frac{\partial \widetilde{L}}{\partial s^\sigma} - \frac{\Delta}{\Delta t}\frac{\partial \widetilde{L}}{\partial s^\Delta}$.

Note that, for $s(t)=t$,
$$
\frac{\partial \widetilde{L}}{\partial s^\sigma}(t, s^\sigma(t),y^\sigma(t), s^\Delta(t),y^\Delta(t))=
\frac{\partial L}{\partial t} \left(s^\sigma(t)-\mu(t)s^\Delta(t),y^\sigma(t),\frac{y^{\Delta}(t)}{s^{\Delta}(t)}\right)s^\Delta(t)
$$
and
\begin{equation*}
\begin{split}
\frac{\partial \widetilde{L}}{\partial s^\Delta}(t, s^\sigma(t),y^\sigma(t), s^\Delta(t),y^\Delta(t))
& =
L\left(s^\sigma(t)-\mu(t)s^\Delta(t),y^\sigma(t),\frac{y^{\Delta}(t)}{s^{\Delta}(t)}\right)\\
& -
\sum_{k=1}^{n} \frac{y_k^{\Delta}(t)}{s^{\Delta}(t)} \frac{\partial L}{\partial y_k^\Delta}
\left(s^\sigma(t)-\mu(t)s^\Delta(t),y^\sigma(t),\frac{y^{\Delta}(t)}{s^{\Delta}(t)}\right)\\
& -
\frac{\partial L}{\partial t}\left(s^\sigma(t)-\mu(t)s^\Delta(t),y^\sigma(t),\frac{y^{\Delta}(t)}{s^{\Delta}(t)}\right) \mu(t)s^\Delta(t).
\end{split}
\end{equation*}

Hence,  for $s(t)=t$,
\begin{equation*}
\begin{split}
&E_s(\widetilde{L})(t, s^\sigma(t),y^\sigma(t), s^\Delta(t),y^\Delta(t))\\
& = \frac{\partial L}{\partial t}(t,y^\sigma(t),y^\Delta(t)) \\
& - \frac{\Delta}{\Delta t}\left(L(t,y^\sigma(t),y^\Delta(t))-\sum_{k=1}^{n} y_k^{\Delta}(t) \frac{\partial L}{\partial y_k^\Delta} (t,y^\sigma(t),y^\Delta(t))- \mu(t) \frac{\partial L}{\partial t}(t,y^\sigma(t),y^\Delta(t))\right).
\end{split}
\end{equation*}
Also note that, for $s(t)=t$ and $k=1,2,\ldots,n$,
$$E_k(\widetilde{L})(t, s^\sigma(t),y^\sigma(t), s^\Delta(t),y^\Delta(t))=E_k(L)(t,y^\sigma(t),y^\Delta(t)).$$

Substituting the above equalities into (\ref{equality1}), we conclude the desired result:
$$
\sum_{k=1}^{n} \sum_{i=0}^{m} (-1)^i \left( \frac{1}{b_1} \right)^{\frac{i(i+1)}{2}}
\left[
 \Big((g^{k}_{ij})^{\sigma} E_k(L) \Big)^{\Delta^i}
+
 \left((f_{ij})^{\sigma} \left( \frac{\partial L}{\partial t} -
\frac{\Delta}{\Delta t}\left(L-y_k^\Delta \frac{\partial L}{\partial y_k^\Delta}- \mu \frac{\partial L}{\partial t}\right)
\right)\right)^{\Delta^i}\right]
\equiv 0
$$
for $j=1,2,\ldots, r$.
\end{proof}

\begin{remark}
Define
$$E_k^s(L):= \frac{\partial L}{\partial t} - \frac{\Delta}{\Delta t}\left(L- \sum_{k=1}^{n} y_k^\Delta \frac{\partial L}{\partial y_k^\Delta}- \mu \frac{\partial L}{\partial t}\right),$$
for $k=1,2,\ldots,n$. Then $E_k^s(L)=0$ are the second Euler--Lagrange equations for problem \eqref{problem} \cite{Zbig+Nat+Torres2011}. Therefore, expression \eqref{formula with transformation of time} provides ``dependencies'' between two types of the Euler--Lagrange expressions.
\end{remark}

Note that if $\mathbb{T}=\mathbb{R}$, Noether's identity (\ref{formula with transformation of time}) simplifies because
$$ \frac{\partial L}{\partial t} - \frac{\Delta}{\Delta t}\left(L- \sum_{k=1}^{n} y_k^\Delta \frac{\partial L}{\partial y_k^\Delta}- \mu \frac{\partial L}{\partial t}\right) =
\frac{\partial L}{\partial t} - \frac{d}{dt}\left(L- \sum_{k=1}^{n} \dot{y}_k \frac{\partial L}{\partial \dot{y}_k}\right) =
- \sum_{k=1}^{n}\dot{y}_k E_k(L)$$
and we obtain the following corollary.

%
%

\begin{corollary} [Classical  Noether's second theorem, cf. \cite{Noether1918}] If functional $\mathcal{L}$ defined by
$$
\mathcal{L}[y]=\int_a^b L(t,y(t),\dot{y}(t))\, dt
$$
is invariant under transformations (\ref{with transformations of time}) (where $\sigma$ denotes in this context the identity function and $\Delta$ denotes the usual derivative),
then there exist the following identities
$$
\sum_{k=1}^{n} \sum_{i=0}^{m} (-1)^i
\left[
 \left(g^{k}_{ij}E_k(L)\right)^{(i)} - \left(f_{ij} \cdot \dot{y}_k E_k(L)\right)^{(i)} \right]
\equiv 0,
\quad\quad j=1,2,\ldots, r.
$$
\end{corollary}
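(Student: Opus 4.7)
The plan is to derive this corollary by specializing Theorem \ref{Theorem with transformation of time} to the continuous time scale $\mathbb{T}=\mathbb{R}$. In that setting $\sigma=\mathrm{id}$, $\mu\equiv 0$, $b_1=1$, and every delta derivative coincides with the ordinary derivative. Consequently $(g^{k}_{ij})^{\sigma}=g^{k}_{ij}$, $(f_{ij})^{\sigma}=f_{ij}$, each prefactor $(1/b_1)^{i(i+1)/2}$ equals $1$, and each $\Delta^i$ reduces to $(\cdot)^{(i)}$; so most of identity (\ref{formula with transformation of time}) collapses to its classical form term by term.

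The one substantive step is to simplify the ``second Euler--Lagrange'' factor that multiplies $(f_{ij})^{\sigma}$ in (\ref{formula with transformation of time}). With $\mu=0$ it reads
$$\frac{\partial L}{\partial t}-\frac{d}{dt}\Bigl(L-\sum_{k=1}^{n}\dot{y}_k\frac{\partial L}{\partial \dot{y}_k}\Bigr).$$
Expanding $dL/dt$ by the chain rule and differentiating the second summand by the product rule, the $\ddot{y}_k$-terms cancel and the whole expression collapses to $-\sum_{k=1}^{n}\dot{y}_k E_k(L)$. This is precisely the identity announced in the paragraph preceding the corollary, and it is the only piece of the argument requiring a genuine computation.

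Substituting this into (\ref{formula with transformation of time}) turns the $f_{ij}$-contribution into $-\bigl(f_{ij}\sum_{k}\dot{y}_k E_k(L)\bigr)^{(i)}=-\sum_{k}(f_{ij}\dot{y}_k E_k(L))^{(i)}$, which combines with the first bracket under a common $\sum_{k=1}^{n}\sum_{i=0}^{m}$ to yield the stated identities for $j=1,\dots,r$. I do not foresee any real obstacle here: the corollary is a transparent specialization of Theorem \ref{Theorem with transformation of time}, and the only supporting ingredient is the two-line classical manipulation above, standard in the continuous theory.
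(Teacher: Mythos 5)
Your proposal is correct and follows exactly the route the paper takes: the paper proves this corollary by the same specialization of Theorem \ref{Theorem with transformation of time} to $\mathbb{T}=\mathbb{R}$ (so $\mu\equiv 0$, $b_1=1$, $\sigma=\mathrm{id}$), with the single substantive step being the chain-rule/product-rule computation showing that the second Euler--Lagrange expression reduces to $-\sum_{k}\dot{y}_k E_k(L)$, which is precisely the identity displayed in the paragraph preceding the corollary. Nothing is missing.
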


In the case $\mathbb{T}=h \mathbb{Z}$ (for some $h>0$) we obtain from Theorem \ref{Theorem with transformation of time} the second Noether theorem for the $h$-calculus; whereas if
$\mathbb{T}=q^{\mathbb{N}_0}$ (for some $q>1$) we get the second Noether theorem for the $q$-calculus.


\subsection{Noether's second theorem - multiple delta integral case}
\label{Second Noether's Theorem - multiple integral case}

In this subsection we extend the second Noether theorem (without transformation of time) to multiple integral variational problems in the time scale setting. For simplicity of presentation we prove the result for the case of two independent variables and transformations that depend on an arbitrary function and its first-order partial delta derivatives. Clearly, our result can be generalized for $n$ independent variables and $r$ arbitrary functions and their higher-order partial delta derivatives.

For the convenience of the reader we recall notions and results that are needed in the sequel. A general introduction to differential calculus and integration theory for multi-variable functions on time scales is presented, respectively, in \cite{Bohner+Guseinov2004} (see also \cite{Sarikaya2009}) and \cite{Bohner+Guseinov2005}. For the  double integral calculus of variations on time scales we refer the reader to
\cite{Bohner+Guseinov2007}.

\vspace{0.3 cm}
Let $\mathbb{T}_1$ and $\mathbb{T}_2$ be two given time scales. For $i=1,2$, denote by $\sigma_i$ and $\Delta_i$ the
forward jump operator and the delta derivative on $\mathbb{T}_i$, respectively.
Let $C^{(1)}_{rd}$ denote the set of all continuous functions defined on $\mathbb{T}_1 \times \mathbb{T}_2$ for which
both the $\Delta_1$-partial derivative and the $\Delta_2$-partial derivative exist and are of class $C_{rd}$ (for a definition see \cite{Bohner+Guseinov2007}).

Let $\Omega \subseteq \mathbb{T}_1 \times \mathbb{T}_2$ be an $\omega$-type set and let
$\Gamma$ be its positively fence (see \cite{Bohner+Guseinov2007}).
Denote
$$\Omega^{\circ}:=\{(x,y) \in \Omega: (\sigma_1(x), \sigma_2(y))\in \Omega\}.$$
Let a function $L(x,y,u,p,q)$, where $(x,y) \in \Omega\cup\Gamma$ and  $(u,p,q)\in \mathbb{R}^{3n}$ be given.
We will suppose that $L$ is continuous, together with its partial delta derivatives of first and second order with respect to $x,y$
and partial usual derivatives of the first and second order with respect to $u,p,q$. In what follows  $u^{\Delta_1}$ and $u^{\Delta_2}$ denote, respectively, $\frac{\partial u}{\Delta_1 x}$ and  $\frac{\partial u}{\Delta_2 y}$.

Consider the following optimization problem:

\begin{equation}
\label{problem2}
\mathcal{L}[u]=\int \int_\Omega L(x,y,u(\sigma_1(x), \sigma_2(y)),u^{\Delta_1}(x, \sigma_2(y)), u^{\Delta_2}(\sigma_1(x), y) ) \Delta_1 x\Delta_2 y
\longrightarrow {\rm extremize}
\end{equation}
where the set of admissible functions are
$$
\mathcal{D}=\{ u\ | \ u: \Omega\cup \Gamma
\rightarrow \mathbb{R}^n,\ u\in C^{(1)}_{rd},\
u=g \ \mbox{on} \  \Gamma\}
$$
where $g$ is a fixed function defined and continuous on the fence $\Gamma$ of $\Omega$.

As noticed in \cite{Bohner+Guseinov2007}, for the variational problem (\ref{problem2}) be well posed, we have to assume that there exists at least one admissible function $u_0 \in \mathcal{D}$ because it is possible to choose a continuous function $g$ such that no function $u$ is admissible.
Note that if there exists an admissible function $u_0$, then the set $\mathcal{D}$ contains a set of functions of the form $u=u_0 + \eta$, where $\eta:\Omega\cup \Gamma \rightarrow\mathbb{R}^n$ is  $C^{(1)}_{rd}$ and $\eta=0$ on $\Gamma$.
Any such $\eta$ is called an admissible variation for problem (\ref{problem2}).

\begin{definition}
We say that $u_{\ast}\in \mathcal{D}$
is a local minimizer (resp. local maximizer) for problem  (\ref{problem2})
if there exists $\delta > 0$ such that
$$
\mathcal{L}[u_{\ast}]\leq \mathcal{L}[u] \quad ( {\rm resp.}  \ \mathcal{L}[u_{\ast}]\geq \mathcal{L}[u])
$$
for all $u \in \mathcal{D}$ with
$$
\parallel u - u_{\ast}\parallel :=
\sup_{(x,y) \in \Omega \cup \Gamma}\mid u(x,y)-u_{\ast}(x,y)\mid
+ \sup_{(x,y) \in \Omega}\mid u^{\Delta_1}(x,\sigma_2(y))-u_{\ast}^{\Delta_1}(x,\sigma_2(y))\mid$$
$$  +
\sup_{(x,y) \in \Omega}\mid u^{\Delta_2}(\sigma_1(x),y)-u_{\ast}^{\Delta_2}(\sigma_1(x),y)\mid
<\delta \, ,
$$
where $|\cdot|$ denotes a norm in $\mathbb{R}^n$.
\end{definition}

We recall the following results which will play an important role in the proofs of our results.

\begin{theorem}[Green's Theorem, \cite{Bohner+GuseinovGREEN2007}]
If the functions $M$ and $N$ are continuous and have continuous partial delta derivatives $\frac{\partial M}{\Delta_2 y}$
and $\frac{\partial N}{\Delta_1 x}$ on $\Omega \cup \Gamma$, then
\begin{equation} \label{Green}
\int \int_\Omega \left( \frac{\partial N}{\Delta_1 x} - \frac{\partial M}{\Delta_2 y} \right) \Delta_1x\Delta_2y = \int_{\Gamma} M d^\ast x + N d^\ast y,
\end{equation}
where the "star line integrals" on the right side in (\ref{Green}) denote the sum of line delta integrals taken over the line segment constituents of
$\Gamma$ directed to the right or upwards and line nabla integrals taken over the line segment constituents of
$\Gamma$ directed to the left or downwards.
\end{theorem}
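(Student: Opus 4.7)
The plan is to mimic the classical proof of Green's theorem: first establish the identity on time-scale rectangles, then extend it to general $\omega$-type sets by an additivity argument in which internal boundary contributions cancel in pairs. The main tools are the one-variable fundamental theorem for delta (and nabla) integrals and a Fubini-type theorem for double delta integrals on rectangles, both of which are available in the foundational papers \cite{Bohner+Guseinov2004,Bohner+Guseinov2005}.

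For a rectangle $R=[a,b]\times[c,d]$ with $a,b\in\mathbb{T}_1$ and $c,d\in\mathbb{T}_2$, Fubini's theorem converts the double integral of $\frac{\partial N}{\Delta_1 x}$ into an iterated integral whose inner integral evaluates, by the one-variable fundamental theorem, to $N(b,y)-N(a,y)$; a symmetric computation for $\frac{\partial M}{\Delta_2 y}$ produces $M(x,d)-M(x,c)$. Thus
$$\int\!\!\int_{R}\Bigl(\tfrac{\partial N}{\Delta_1 x}-\tfrac{\partial M}{\Delta_2 y}\Bigr)\Delta_1 x\,\Delta_2 y=\int_{c}^{d}\bigl(N(b,y)-N(a,y)\bigr)\Delta_2 y-\int_{a}^{b}\bigl(M(x,d)-M(x,c)\bigr)\Delta_1 x,$$
which has to be matched against the star line integral around $\partial R$ traversed counterclockwise. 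The bottom and right edges are oriented rightward and upward and, by convention, contribute the delta integrals $\int_{a}^{b}M(x,c)\,\Delta_1 x$ and $\int_{c}^{d}N(b,y)\,\Delta_2 y$, whereas the top and left edges are oriented leftward and downward and contribute, via nabla integrals, the negatives of $\int_{a}^{b}M(x,d)\,\Delta_1 x$ and $\int_{c}^{d}N(a,y)\,\Delta_2 y$. A small verification using continuity of $M$ and $N$ shows that on a horizontal or vertical edge the delta integral and the nabla integral of the same integrand coincide, so these choices are consistent; summing the four edge contributions reproduces the right-hand side above.

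For a general $\omega$-type set $\Omega$, decompose $\Omega$ into a countable family of rectangles with pairwise disjoint interiors compatible with the combinatorial structure of $\Gamma$, apply the rectangle case piece-by-piece, and sum. Along each shared interior edge the two adjacent rectangles induce opposite orientations, so by the delta--nabla equivalence noted above the interior line-integral contributions cancel exactly, leaving only the star line integral over the positively oriented fence $\Gamma$.

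The main obstacle is the last step: an $\omega$-type set in $\mathbb{T}_1\times\mathbb{T}_2$ can have a combinatorially intricate fence, mixing right-dense and right-scattered points on either axis, so one must argue that a suitable rectangle decomposition exists, that the countable sum converges (using continuity of $M$, $N$ and of the partial derivatives $\tfrac{\partial N}{\Delta_1 x}$, $\tfrac{\partial M}{\Delta_2 y}$ together with the definition of the double delta integral), and that cancellation at interior edges is exact at scattered points — where delta and nabla integrals could a priori differ by isolated point masses. Verifying this rigorously is precisely what occupies the bulk of the proof in \cite{Bohner+GuseinovGREEN2007}, and any streamlined presentation would rely on it.
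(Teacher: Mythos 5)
First, be aware that the paper contains no proof of this statement: it is quoted as a known result from Bohner and Guseinov \cite{Bohner+GuseinovGREEN2007}, so your proposal cannot be measured against an internal argument and must stand on its own. Your skeleton (rectangles via Fubini and the one-variable fundamental theorem, then decomposition of the $\omega$-type set with cancellation along interior edges) is indeed the standard route.

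The genuine gap is the step you call ``a small verification'': the claim that on an edge the delta and nabla integrals of the same continuous integrand coincide is false on a general time scale, and it is precisely the point of the whole theorem. On $\mathbb{T}=\mathbb{Z}$ one has $\int_0^2 f(t)\,\Delta t=f(0)+f(1)$ but $\int_0^2 f(t)\,\nabla t=f(1)+f(2)$; the correct relation is $\int_a^b f(t)\,\Delta t=\int_a^b f(\rho(t))\,\nabla t$, i.e.\ agreement holds only after a backward shift of the integrand. Concretely, take $\mathbb{T}_1=\mathbb{T}_2=\mathbb{Z}$, $R=[0,2]\times[0,2]$, $M\equiv 0$ and $N(x,y)=g(y)$: the left-hand side of \eqref{Green} is $0$, while your edge assignments give $\int_0^2 g(y)\,\Delta_2 y-\int_0^2 g(y)\,\nabla_2 y=g(0)-g(2)\neq 0$ in general. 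So with your convention the rectangle identity already fails, and the interior-edge cancellation in the decomposition step fails for the same reason. The star-integral convention (delta on right/up segments, nabla on left/down segments) is not cosmetic: it only yields a true identity because the line integrals of the second kind are defined with the integrand evaluated at appropriately shifted arguments ($\rho$ or $\sigma$ of the edge variable), and your sketch never tracks these shifts. Since you then defer the remaining difficulties (existence and convergence of the rectangle decomposition, exactness of cancellation at scattered points) back to \cite{Bohner+GuseinovGREEN2007}, what remains is a plausible outline whose central identity is mis-stated rather than a proof.
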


\begin{lemma} [Fundamental lemma of the double variational calculus, \cite{Bohner+Guseinov2007}]
\label{Fundamental lemma of the double variational calculus}
If $M$ is continuous on $\Omega\cup \Gamma$ with
$$
\int \int_\Omega M(x,y) \eta(\sigma_1(x), \sigma_2(y)) \Delta_1 x\Delta_2 y=0
$$
for any admissible variation $\eta$, then
$$
M(x,y)=0 \ \mbox{for all} \  (x,y)\in \Omega^{\circ}.
$$
\end{lemma}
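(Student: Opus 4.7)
The plan is to prove this by contradiction, following the usual pattern of fundamental lemmas of the calculus of variations but adapting the standard bump-function construction to the time scale setting. Assume there exists $(x_0,y_0)\in\Omega^\circ$ with $M(x_0,y_0)\neq 0$; without loss of generality, $M(x_0,y_0)>0$. Note that $(x_0,y_0)\in\Omega^\circ$ means in particular that $(\sigma_1(x_0),\sigma_2(y_0))\in\Omega$, so the point $(\sigma_1(x_0),\sigma_2(y_0))$ is in the interior of the region on which $M$ is controlled. The strategy is to construct a specific admissible variation $\eta$ that isolates a contribution of a definite sign to the double integral, thereby contradicting the vanishing hypothesis.

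First I would use the continuity of $M$ to produce a neighborhood $U$ of $(x_0,y_0)$ in $\mathbb{T}_1\times\mathbb{T}_2$ on which $M(x,y)>\tfrac{1}{2}M(x_0,y_0)>0$, small enough that the closure of $U$ lies inside $\Omega\cup\Gamma$ and, crucially, so that $(\sigma_1(x),\sigma_2(y))\in\Omega$ for every $(x,y)\in U$. Then I would construct $\eta:\Omega\cup\Gamma\to\mathbb{R}^n$ of class $C^{(1)}_{rd}$, with $\eta\equiv 0$ on $\Gamma$, and such that $\eta(\sigma_1(x),\sigma_2(y))\geq 0$ on all of $\Omega$ with $\eta(\sigma_1(x),\sigma_2(y))>0$ on a nontrivial subset $V\subseteq U$. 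In the right-dense case in both variables, this is just a product of two one-variable bump functions (e.g.\ translates of $(x-a)^2(b-x)^2$ truncated appropriately), which live in $C^{(1)}_{rd}$. In the right-scattered case, since the double delta integral reduces on isolated points to a finite weighted sum, one may simply pick $\eta$ to be zero everywhere except at the single pre-image point $(\sigma_1(x_0),\sigma_2(y_0))$, where it takes a small positive value; extending $\eta$ to the rest of the grid by zero preserves rd-continuity and the boundary condition. Mixed cases (one dense, one scattered) combine the two constructions coordinatewise.

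With such an $\eta$, the integrand $M(x,y)\,\eta(\sigma_1(x),\sigma_2(y))$ is nonnegative throughout $\Omega$ and strictly positive on a set of positive $\Delta_1\Delta_2$-measure (or with positive discrete weight in the scattered case), hence
\[
\int\!\!\int_\Omega M(x,y)\,\eta(\sigma_1(x),\sigma_2(y))\,\Delta_1 x\,\Delta_2 y > 0,
\]
contradicting the hypothesis. Therefore $M(x,y)=0$ for all $(x,y)\in\Omega^\circ$.

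The main obstacle is the construction of the admissible variation $\eta$: unlike in the classical case $\mathbb{T}_1=\mathbb{T}_2=\mathbb{R}$, where smooth bump functions are immediately available, one has to treat dense and scattered points separately and verify in each case that the resulting $\eta$ lies in $C^{(1)}_{rd}$, vanishes on the fence $\Gamma$, and produces a nonvanishing contribution to the double integral through the shifted argument $(\sigma_1(x),\sigma_2(y))$. This is precisely where the assumption $(x_0,y_0)\in\Omega^\circ$ (rather than $\Omega$) is essential, since it guarantees that the shifted evaluation point still belongs to $\Omega$ and thus lies in the admissible range of $\eta$.
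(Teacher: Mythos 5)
The paper does not prove this lemma: it is quoted verbatim from Bohner and Guseinov's \emph{Double integral calculus of variations on time scales} (the cited reference), where it is established by exactly the contradiction-plus-bump-function argument you describe, with the same case analysis on right-dense versus right-scattered coordinates. So your proposal reconstructs, essentially correctly, the proof of the imported result rather than anything the authors prove here.

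One detail in your scattered case deserves more care. Taking $\eta$ nonzero only at the single point $(\sigma_1(x_0),\sigma_2(y_0))$ is legitimate only if that point is isolated in $\mathbb{T}_1\times\mathbb{T}_2$. Since $x_0$ is right-scattered, $\sigma_1(x_0)$ is automatically left-scattered, but it may well be right-dense, in which case a function supported at that single point is not continuous (and $C^{(1)}_{rd}$ here requires continuity of $\eta$ itself, not just rd-continuity of its delta partials). The fix is the one you already use in the dense case: replace the point mass by a one-sided bump decaying continuously to the right of $\sigma_1(x_0)$ (and analogously in $y$), chosen small enough that $\eta(\sigma_1(x),\sigma_2(y))$ still vanishes outside the neighborhood $U$ on which $M>\tfrac{1}{2}M(x_0,y_0)$; the contribution of the cell $[x_0,\sigma_1(x_0))\times[y_0,\sigma_2(y_0))$, weighted by $\mu_1(x_0)\mu_2(y_0)$, then still yields a strictly positive integral. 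With that correction the argument is sound.
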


\begin{theorem}[Euler--Lagrange equation of the double variational calculus,  \cite{Bohner+Guseinov2007}]
Suppose that an admissible function $u_\ast$ provides a local minimum for $\mathcal{L}$ and that $u_\ast$ has continuous partial delta derivatives of the second order.
Then $u_\ast$ satisfies the Euler--Lagrange equation
$$
\frac{\partial L}{\partial u}(\cdot)
- \frac{\partial}{\Delta_1 x} \frac{\partial L}{\partial p}(\cdot)
- \frac{\partial}{\Delta_2 y} \frac{\partial L}{\partial q}(\cdot)=0
$$
where
$
(\cdot)=(x,y,u(\sigma_1(x), \sigma_2(y)),u^{\Delta_1}(x, \sigma_2(y)), u^{\Delta_2}(\sigma_1(x), y))
$
for $(x,y) \in \Omega^{\circ}$.
\end{theorem}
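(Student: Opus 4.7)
The plan is to follow the classical variational recipe, adapted to the double delta-integral setting, using Green's theorem as the integration-by-parts tool and the fundamental lemma of Lemma \ref{Fundamental lemma of the double variational calculus} to extract the pointwise equation.

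First, take any admissible variation $\eta$ (so $\eta \in C^{(1)}_{rd}$ on $\Omega \cup \Gamma$ with $\eta = 0$ on $\Gamma$), set $u_\varepsilon := u_\ast + \varepsilon \eta$, and note $u_\varepsilon \in \mathcal{D}$. Define $\phi(\varepsilon) := \mathcal{L}[u_\varepsilon]$. Since $u_\ast$ is a local minimizer, $\phi$ attains its minimum at $\varepsilon = 0$, so $\phi'(0) = 0$. Under the smoothness hypotheses on $L$ one may differentiate under the double integral (this is the two-variable analogue of Lemma \ref{derivation under integral sign}) to obtain
\begin{equation*}
\phi'(0) = \int\!\!\int_\Omega \left( \frac{\partial L}{\partial u}(\cdot)\, \eta^{\sigma_1 \sigma_2} + \frac{\partial L}{\partial p}(\cdot)\, \eta^{\Delta_1 \sigma_2} + \frac{\partial L}{\partial q}(\cdot)\, \eta^{\sigma_1 \Delta_2} \right) \Delta_1 x\, \Delta_2 y = 0,
\end{equation*}
where $(\cdot)$ is the evaluation argument appearing in the statement and $\eta^{\sigma_1 \sigma_2}$, $\eta^{\Delta_1 \sigma_2}$, $\eta^{\sigma_1 \Delta_2}$ are abbreviations for the values of $\eta$ and its first partial delta derivatives at the appropriate shifted points.

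Next I would use Green's theorem to transfer the delta derivatives off $\eta$. Apply the product rule in the form
\begin{equation*}
\frac{\partial}{\Delta_1 x}\left( \frac{\partial L}{\partial p}(\cdot)\, \eta \right) = \frac{\partial}{\Delta_1 x}\!\left(\frac{\partial L}{\partial p}(\cdot)\right) \eta^{\sigma_1} + \frac{\partial L}{\partial p}(\cdot)\, \eta^{\Delta_1}
\end{equation*}
(and likewise with the roles of $x$ and $y$ interchanged), so that the two mixed terms $\frac{\partial L}{\partial p}(\cdot)\, \eta^{\Delta_1 \sigma_2}$ and $\frac{\partial L}{\partial q}(\cdot)\, \eta^{\sigma_1 \Delta_2}$ split into a ``divergence'' piece and an ``Euler--Lagrange'' piece. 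The continuity of the second-order partial delta derivatives of $u_\ast$ ensures the terms $\frac{\partial}{\Delta_1 x}\frac{\partial L}{\partial p}(\cdot)$ and $\frac{\partial}{\Delta_2 y}\frac{\partial L}{\partial q}(\cdot)$ are well defined and continuous, which is what Green's theorem requires. Invoking Green's theorem (\ref{Green}) with $N = \frac{\partial L}{\partial p}(\cdot)\, \eta$ and $M = -\frac{\partial L}{\partial q}(\cdot)\, \eta$ converts the divergence pieces into a ``star line integral'' over $\Gamma$, which vanishes because $\eta \equiv 0$ on $\Gamma$.

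After this rearrangement the first variation takes the form
\begin{equation*}
\int\!\!\int_\Omega \left( \frac{\partial L}{\partial u}(\cdot) - \frac{\partial}{\Delta_1 x} \frac{\partial L}{\partial p}(\cdot) - \frac{\partial}{\Delta_2 y} \frac{\partial L}{\partial q}(\cdot) \right) \eta^{\sigma_1 \sigma_2}(x,y)\, \Delta_1 x\, \Delta_2 y = 0
\end{equation*}
for every admissible variation $\eta$. Applying Lemma \ref{Fundamental lemma of the double variational calculus} (with $M$ being the bracketed expression) yields the stated Euler--Lagrange equation on $\Omega^\circ$.

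The main obstacle I anticipate is the correct bookkeeping of the jump-operator arguments in the product rule and in Green's formula: one must verify that the boundary ``star line integral'' really does reduce, via $\eta|_\Gamma = 0$, to zero in each of its delta and nabla line-integral constituents. Everything else is routine once the differentiation-under-the-integral lemma and Green's theorem are in place.
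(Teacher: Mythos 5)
Your argument is correct and is exactly the standard route: the paper itself gives no proof of this theorem (it is quoted from \cite{Bohner+Guseinov2007}), and the tools it recalls immediately beforehand --- Green's theorem and the fundamental lemma of the double variational calculus --- are precisely the ones you deploy, in the same order, so your proof matches the cited source's approach. The only delicate point is the one you already flag: the boundary star line integral involves $\eta$ evaluated at the shifted arguments $(x,\sigma_2(y))$ and $(\sigma_1(x),y)$, and it is the construction of the positively oriented fence $\Gamma$ (together with $\eta\equiv 0$ there) that makes these terms vanish; granting that, everything goes through.
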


Let us denote by $\rho_1$ and $\rho_2$ the backward jump operator of $\mathbb{T}_1$ and $\mathbb{T}_2$, respectively.
In what follows we will suppose that $\mathbb{T}_1$ and $\mathbb{T}_2$ are such
\begin{equation}\label{property1}
\sigma_1(\rho_1(x))=x, \quad \forall x \in {(\mathbb{T}_1)}_{\kappa}
\end{equation}
and
$$
\sigma_2(\rho_2(y))=y, \quad \forall y \in {(\mathbb{T}_2)}_{\kappa},
$$

where $\mathbb{T}_{\kappa}:=\mathbb{T}\setminus\{m\}$ if $\mathbb{T}$ has a right-scattered minimum $m$; otherwise, $\mathbb{T}_{\kappa}=\mathbb{T}$.
We recall the fact that $\mathbb{R}$, $h \mathbb{Z}$ (for some $h>0$), $q^{\mathbb{N}_0}$ (for some $q>1$)
and many other interesting time scales satisfy property (\ref{property1}).

Let $u(x,y)=(u_1(x,y), u_2(x,y), \ldots, u_n(x,y))$ and consider the following transformations that depend on an arbitrary continuous function $p$
and the partial delta derivatives of $p$:

\begin{equation}\label{without transformations of time double integral}
\left\{
\begin{array}{ccl}
\overline{x}  & = & x\\
& & \\
\overline{y}  & = & y\\
& & \\
\overline{u}_k(\overline{x}, \overline{y})& = & u_k(x,y) + T^k(p)(x,y)
\end{array}
\right.
\end{equation}
\\

where, for each $k= 1,2,\ldots, n$,
$$T^k(p)(x,y):=  a_0^k(x,y)p(x,y) +
a_1^k(x,y)\frac{\partial}{\Delta_1 x}p(\rho_1(x),y) +
a_2^k(x,y)\frac{\partial}{\Delta_2 y}p(x, \rho_2(y)), $$
 $a_0, a_1, a_2$ are  $C^{1}$ functions  and we assume that $p$ has continuous partial delta derivatives of the first and second order.

\begin{definition}\label{invariance double integral}
Functional $\mathcal{L}$ is invariant under transformations (\ref{without transformations of time double integral})
if, and only if, for all $u \in \mathcal{D}$ we have
$$
\int \int_\Omega L(x,y,u(\sigma_1(x), \sigma_2(y)),u^{\Delta_1}(x, \sigma_2(y)), u^{\Delta_2}(\sigma_1(x), y) ) \Delta_1 x\Delta_2 y
$$
$$=
\int \int_\Omega L(x,y,\overline{u}(\sigma_1(x), \sigma_2(y)),\overline{u}^{\Delta_1}(x, \sigma_2(y)), \overline{u}^{\Delta_2}(\sigma_1(x), y) ) \Delta_1 x\Delta_2 y.
$$
\end{definition}

In what follows we use the notations
$$T^k(p^{\sigma})(x,y):=T^k(\sigma_1(x),\sigma_2(y)), \ T^k(p^{\sigma_1})(x,y):=T^k(\sigma_1(x),y), \ T^k(p^{\sigma_2})(x,y):=T^k(x,\sigma_2(y)).$$

Using similar arguments as the ones used in the proof of Theorem \ref{necessary condition for invariance} we can prove the following result.

\begin{theorem}[Necessary condition of invariance]\label{necessary condition for invariance double integral}
If functional $\mathcal{L}$ is invariant under transformations (\ref{without transformations of time double integral}), then
\begin{equation}\label{invariance2}
\sum_{k=1}^{n} \int \int_\Omega  \left( \frac{\partial L}{\partial u^{\sigma}_k} \cdot T^k(p^{\sigma}) +
\frac{\partial L}{\partial u^{\Delta_1}_{k}}\frac{\partial}{\Delta_1 x}T^k(p^{\sigma_2})+
\frac{\partial L}{\partial u^{\Delta_2}_{k}}\frac{\partial}{\Delta_2 y}T^k(p^{\sigma_1})
\right) \Delta_1 x\Delta_2 y=0.
\end{equation}
\end{theorem}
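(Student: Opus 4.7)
The plan is to mimic the proof of Theorem \ref{necessary condition for invariance}, embedding the fixed transformation~(\ref{without transformations of time double integral}) into a one-parameter family by replacing the arbitrary function $p$ with $\varepsilon p$ (for $\varepsilon \in \mathbb{R}$), and then differentiating the invariance identity with respect to $\varepsilon$ at $\varepsilon = 0$. Because $T^k$ is linear in $p$ and its partial delta derivatives, the perturbed transformation takes the form $\overline{u}_k = u_k + \varepsilon T^k(p)$, and the invariance hypothesis (which must hold for any admissible function and, by arbitrariness of $p$, for any real $\varepsilon$) yields
\begin{equation*}
\int\!\!\int_\Omega L(x,y,u^{(\sigma_1,\sigma_2)},u^{\Delta_1},u^{\Delta_2})\,\Delta_1 x\,\Delta_2 y
= \int\!\!\int_\Omega L(x,y,U_\varepsilon,P_\varepsilon,Q_\varepsilon)\,\Delta_1 x\,\Delta_2 y
\end{equation*}
where $U_\varepsilon$, $P_\varepsilon$, $Q_\varepsilon$ are the corresponding perturbed expressions.

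Next I would identify these perturbed arguments explicitly. Using the linearity of the partial delta derivatives, one has
\begin{align*}
U_\varepsilon &= u_k(\sigma_1(x),\sigma_2(y)) + \varepsilon\, T^k(p^\sigma)(x,y),\\
P_\varepsilon &= u_k^{\Delta_1}(x,\sigma_2(y)) + \varepsilon\, \tfrac{\partial}{\Delta_1 x}T^k(p^{\sigma_2})(x,y),\\
Q_\varepsilon &= u_k^{\Delta_2}(\sigma_1(x),y) + \varepsilon\, \tfrac{\partial}{\Delta_2 y}T^k(p^{\sigma_1})(x,y),
\end{align*}
for each component $k$. Differentiating the right-hand side with respect to $\varepsilon$, applying the (classical) chain rule with respect to the $\mathbb{R}^{3n}$-variables of $L$, and then setting $\varepsilon = 0$, produces precisely the integrand in (\ref{invariance2}) summed over $k$. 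The left-hand side is independent of $\varepsilon$, so its derivative vanishes, giving the required identity.

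The only nontrivial point is the justification of differentiation under the double delta integral. This requires a two-variable analogue of Lemma \ref{derivation under integral sign}, namely that if $f(x,y,\varepsilon)$ is continuous on $(\Omega\cup\Gamma)\times\mathbb{R}$ and $\partial f/\partial\varepsilon$ is continuous in $\varepsilon$ uniformly in $(x,y)$, then $\tfrac{d}{d\varepsilon}\int\!\!\int_\Omega f\,\Delta_1 x\,\Delta_2 y = \int\!\!\int_\Omega \tfrac{\partial f}{\partial\varepsilon}\,\Delta_1 x\,\Delta_2 y$. This follows by iterating Lemma \ref{derivation under integral sign} in the two time-scale variables (via Fubini for double delta integrals, cf.\ \cite{Bohner+Guseinov2005}), and the uniform-continuity hypothesis is met because $L$ is $C^2$ in its non-time variables, the coefficients $a_0^k,a_1^k,a_2^k$ are $C^1_{rd}$, and $p$ has continuous partial delta derivatives up to order two, so $T^k(p^\sigma)$, $\tfrac{\partial}{\Delta_1 x}T^k(p^{\sigma_2})$ and $\tfrac{\partial}{\Delta_2 y}T^k(p^{\sigma_1})$ are bounded on the compact set $\Omega\cup\Gamma$. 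I expect this technical verification of differentiation under the integral to be the main obstacle; once it is in place, the rest of the argument is a direct transcription of the single-integral proof.
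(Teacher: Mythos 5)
Your proposal is correct and follows essentially the same route as the paper, which simply states that Theorem~\ref{necessary condition for invariance double integral} is proved "using similar arguments as the ones used in the proof of Theorem~\ref{necessary condition for invariance}": embed the transformation into a one-parameter family via $\varepsilon p$, exploit linearity of $T^k$, and differentiate the invariance identity at $\varepsilon=0$. Your explicit discussion of differentiation under the double delta integral addresses a technical point the paper leaves implicit, but it does not change the argument.
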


Define

$$
\widehat{E}_k(L):=
\frac{\partial L}{\partial u^{\sigma}_k}
- \frac{\partial}{\Delta_1 x} \frac{\partial L}{\partial u^{\Delta_1}_{k}}
- \frac{\partial}{\Delta_2 y} \frac{\partial L}{\partial u^{\Delta_2}_{k}},  \quad k=1,2, \ldots, n.
$$
We call $\widehat{E}_k(L)$,  $k=1,2, \ldots, n$,  the Euler--Lagrange expressions associated to the Lagrangian $L$ relative to problem
(\ref{problem2}).

The following lemmas will be used in the proof of Theorem \ref{Noether's second theorem without transforming time double integral}.

\begin{lemma} \label{Invariant double integral}
If $\mathcal{L}$ is invariant under transformations (\ref{without transformations of time double integral}), then
$$
\sum_{k=1}^{n} \int \int_\Omega \widehat{E}_{k}(L) \cdot T^{k}(p^{\sigma}) \Delta_1 x\Delta_2 y=0.
$$
\end{lemma}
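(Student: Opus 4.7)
The plan is to start from the necessary condition of invariance (Theorem \ref{necessary condition for invariance double integral}) and perform a time-scale integration by parts on the two terms that contain partial delta derivatives of $T^k(p)$, converting them into a multiple of the Euler--Lagrange combination $\widehat{E}_k(L)$ together with a pure divergence that can be swept to the boundary via Green's theorem.

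First I would rewrite each mixed term using the time-scale product rule in the appropriate variable. For the $\Delta_1$-term one has
$$
\frac{\partial L}{\partial u^{\Delta_1}_k}\,\frac{\partial}{\Delta_1 x}T^{k}(p^{\sigma_2})
= \frac{\partial}{\Delta_1 x}\!\left(\frac{\partial L}{\partial u^{\Delta_1}_k}\cdot T^{k}(p^{\sigma_2})\right)
- \frac{\partial}{\Delta_1 x}\frac{\partial L}{\partial u^{\Delta_1}_k}\cdot \bigl(T^{k}(p^{\sigma_2})\bigr)^{\sigma_1},
$$
and a symmetric identity in the $\Delta_2$-variable. The key observation is that
$\bigl(T^{k}(p^{\sigma_2})\bigr)^{\sigma_1}(x,y)=T^{k}(p)(\sigma_1(x),\sigma_2(y))=T^{k}(p^{\sigma})(x,y)$, and likewise $\bigl(T^{k}(p^{\sigma_1})\bigr)^{\sigma_2}=T^{k}(p^{\sigma})$, so both shifted factors collapse to the common expression $T^{k}(p^{\sigma})$.

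Substituting these product-rule identities into (\ref{invariance2}) and collecting the terms that multiply $T^{k}(p^{\sigma})$ produces exactly the Euler--Lagrange combination $\widehat{E}_k(L)$, leaving a remainder of the form
$$
\sum_{k=1}^{n}\int\!\!\int_{\Omega}\left[\frac{\partial}{\Delta_1 x}\!\left(\frac{\partial L}{\partial u^{\Delta_1}_k}T^{k}(p^{\sigma_2})\right)+\frac{\partial}{\Delta_2 y}\!\left(\frac{\partial L}{\partial u^{\Delta_2}_k}T^{k}(p^{\sigma_1})\right)\right]\Delta_1 x\,\Delta_2 y.
$$
I would then apply Green's theorem (\ref{Green}) to each summand with $N=\frac{\partial L}{\partial u^{\Delta_1}_k}T^{k}(p^{\sigma_2})$ and $M=-\frac{\partial L}{\partial u^{\Delta_2}_k}T^{k}(p^{\sigma_1})$, turning the remainder into a star line integral over the fence $\Gamma$.

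By the arbitrariness of $p$, I restrict attention to those $p$ which vanish, together with their first-order partial delta derivatives, on $\Gamma$; this forces $T^{k}(p)$ (and hence $T^{k}(p^{\sigma_1})$, $T^{k}(p^{\sigma_2})$) to vanish on $\Gamma$, making the boundary integrals disappear. What remains is precisely
$$
\sum_{k=1}^{n}\int\!\!\int_{\Omega}\widehat{E}_{k}(L)\cdot T^{k}(p^{\sigma})\,\Delta_1 x\,\Delta_2 y=0,
$$
which is the desired identity. The main technical obstacle I anticipate is bookkeeping the forward-shift operators $\sigma_1,\sigma_2$ consistently when applying the time-scale product rule (here the hypothesis $\sigma_i\circ\rho_i=\mathrm{id}$ from (\ref{property1}) is what makes the shifted factors reassemble into $T^k(p^\sigma)$); the Green-theorem step and the vanishing of the boundary contributions are then routine.
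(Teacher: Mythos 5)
Your proposal is correct and follows essentially the same route as the paper's proof: apply the time-scale product rule to the two derivative terms in the necessary condition of invariance (\ref{invariance2}), noting that the shifted factors $\bigl(T^{k}(p^{\sigma_2})\bigr)^{\sigma_1}$ and $\bigl(T^{k}(p^{\sigma_1})\bigr)^{\sigma_2}$ both collapse to $T^{k}(p^{\sigma})$, then remove the divergence term with Green's theorem by restricting to $p$ whose relevant traces vanish on $\Gamma$. Your write-up is, if anything, slightly more explicit than the paper about why the product-rule remainders reassemble into $T^{k}(p^{\sigma})$.
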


\begin{proof}
Fix $i \in \{1,2,\ldots,n\}$. Observe that
$$
\int \int_\Omega  \left(  \frac{\partial L}{\partial u^{\Delta_1}_{i}}\frac{\partial}{\Delta_1 x}T^i(p^{\sigma_2})+
\frac{\partial L}{\partial u^{\Delta_2}_{i}}\frac{\partial}{\Delta_2 y}T^i(p^{\sigma_1})
\right) \Delta_1 x\Delta_2 y
$$
$$
= \int \int_\Omega \left[ \frac{\partial}{\Delta_1 x} \left(  \frac{\partial L}{\partial u^{\Delta_1}_{i}} \cdot T^i(p^{\sigma_2}) \right) +
\frac{\partial}{\Delta_2 y} \left(  \frac{\partial L}{\partial u^{\Delta_2}_{i}} \cdot T^i(p^{\sigma_1}) \right)\right] \Delta_1 x\Delta_2 y
$$
$$
- \int \int_\Omega  \left[  \frac{\partial}{\Delta_1 x} \frac{\partial L}{\partial u^{\Delta_1}_{i}} \cdot T^i(p^{\sigma})+
 \frac{\partial}{\Delta_2 y} \frac{\partial L}{\partial u^{\Delta_2}_{i}} \cdot T^i(p^{\sigma})
\right] \Delta_1 x\Delta_2 y.
$$

Using Green's Theorem we get,
$$
\int \int_\Omega \left[ \frac{\partial}{\Delta_1 x} \left(  \frac{\partial L}{\partial u^{\Delta_1}_{i}} \cdot T^i(p^{\sigma_2}) \right) +
\frac{\partial}{\Delta_2 y} \left(  \frac{\partial L}{\partial u^{\Delta_2}_{i}} \cdot T^i(p^{\sigma_1}) \right)\right] \Delta_1 x\Delta_2 y$$
$$
= \int_{\Gamma} \frac{\partial L}{\partial u^{\Delta_1}_{i}} \cdot T^i(p^{\sigma_2})  d^\ast y -   \frac{\partial L}{\partial u^{\Delta_2}_{i}} \cdot T^i(p^{\sigma_1}) d^\ast x.
$$
Since $p$ is arbitrary we can choose $p$ such that
$$
p(x,\sigma_2(y))|_{\Gamma}=0, \quad \quad p(\sigma_1(x),y)|_{\Gamma}=0,
$$
$$
\frac{\partial}{\Delta_1 x} p(\rho_1(x), \sigma_2(y))|_{\Gamma}=0, \quad \quad
\frac{\partial}{\Delta_1 x} p(x, y)|_{\Gamma}=0,
$$
$$\frac{\partial}{\Delta_2 y} p(x, y)|_{\Gamma}=0, \quad \quad
\frac{\partial}{\Delta_2 y} p(\sigma_1(x), \rho_2(y))|_{\Gamma}=0,
$$
and therefore
$$
\int \int_\Omega \left[ \frac{\partial}{\Delta_1 x} \left(  \frac{\partial L}{\partial u^{\Delta_1}_{i}} \cdot T^i(p^{\sigma_2}) \right) +
\frac{\partial}{\Delta_2 y} \left(  \frac{\partial L}{\partial u^{\Delta_2}_{i}} \cdot T^i(p^{\sigma_1}) \right)\right] \Delta_1 x\Delta_2 y=0.
$$
By Theorem \ref{necessary condition for invariance double integral} we obtain
$$
\sum_{k=1}^{n} \int \int_\Omega  \left( \frac{\partial L}{\partial u^{\sigma}_k} \cdot T^k(p^{\sigma})
- \frac{\partial}{\Delta_1 x} \frac{\partial L}{\partial u^{\Delta_1}_{k}} \cdot T^k(p^{\sigma})
- \frac{\partial}{\Delta_2 y} \frac{\partial L}{\partial u^{\Delta_2}_{k}} \cdot T^k(p^{\sigma})\right)\Delta_1 x\Delta_2 y=0
$$
which proves that
$
\displaystyle \sum_{k=1}^{n} \int \int_\Omega \widehat{E}_{k}(L) \cdot T^k(p^{\sigma}) \Delta_1x \Delta_2 y=0.
$
\end{proof}

\begin{lemma}\label{adjointoperator}
For each $k=1,2,\ldots,n$,
$$ \int \int_\Omega q \cdot T^{k}(p^{\sigma}) \Delta_1x \Delta_2y =
\int \int_\Omega \left( q a_0^{k} - \frac{\partial}{\Delta_1 x}(q a_1^{k}) - \frac{\partial}{\Delta_2 y} (q a_2^{k}) \right) \cdot p^{\sigma} \Delta_1 x \Delta_2y
$$ holds.
\end{lemma}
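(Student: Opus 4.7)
The identity should follow by two successive integration-by-parts, one in the $x$-direction and one in the $y$-direction, together with Green's Theorem to discard boundary contributions, mirroring the technique of Lemma \ref{Invariant double integral}.

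First I would substitute the definition of $T^{k}(p^{\sigma})$ into the left-hand side, splitting the integrand into three pieces:
$$q \cdot T^{k}(p^{\sigma}) = q \, a_0^{k} \, p^{\sigma} + q \, a_1^{k} \cdot \left(\text{a $\Delta_1$-derivative of $p$}\right) + q \, a_2^{k} \cdot \left(\text{a $\Delta_2$-derivative of $p$}\right),$$
where the delta-derivative factors come from the two summands in the definition of $T^{k}(p)$ shifted appropriately by $\sigma_1,\sigma_2$. The first summand already matches the corresponding term on the right-hand side and requires no further work.

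Next I would apply the time-scale product rule to the two derivative-containing pieces. For the $a_1^{k}$-piece I would choose the version of Leibniz's rule whose internal $\sigma_1$-shift aligns with the $\rho_1(x)$ appearing in the definition of $T^{k}$, namely
$$q \, a_1^{k} \cdot \frac{\partial p^{\sigma}}{\Delta_1 x} \;=\; \frac{\partial}{\Delta_1 x}\bigl( q \, a_1^{k} \cdot p^{\sigma}\bigr) \;-\; \frac{\partial (q \, a_1^{k})}{\Delta_1 x} \cdot p^{\sigma},$$
and symmetrically for the $a_2^{k}$-piece in the $y$-direction. The assumption (\ref{property1}) that $\sigma_i\circ\rho_i = \mathrm{id}$ is exactly what is needed for the $\sigma_i$-shifts produced by Leibniz's rule to cancel against the $\rho_i$ in the defining formula of $T^{k}$, so that the two terms above are honest identities on $\Omega$.

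Finally, I would apply Green's Theorem to each of the two total-$\Delta_i$-derivative terms, converting them into line integrals over the positive fence $\Gamma$. Because the lemma is invoked (in the proof of the forthcoming Noether theorem) for a function $p$ satisfying exactly the boundary vanishing conditions already imposed in the proof of Lemma \ref{Invariant double integral} — namely that $p$ and its first-order partial delta derivatives, composed with the relevant $\sigma_i$'s and $\rho_i$'s, vanish along $\Gamma$ — both line integrals drop out. What remains is precisely
$$\int\!\!\int_\Omega \left(q \, a_0^{k} - \frac{\partial(q\,a_1^{k})}{\Delta_1 x} - \frac{\partial(q\,a_2^{k})}{\Delta_2 y}\right) p^{\sigma}\, \Delta_1 x\, \Delta_2 y,$$
which is the desired right-hand side. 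The main technical obstacle is the careful bookkeeping of $\sigma_i$- and $\rho_i$-shifts inside the arguments of $p$ and its delta derivatives; once assumption (\ref{property1}) is invoked to collapse these compositions, the proof reduces to two standard applications of the time-scale product rule followed by Green's Theorem.
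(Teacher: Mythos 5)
Your proposal is correct and follows essentially the same route as the paper's proof: substitute the definition of $T^{k}(p^{\sigma})$ (using condition (\ref{property1}) to collapse the $\sigma_i\circ\rho_i$ compositions), apply the time-scale product rule to the two derivative terms, and invoke Green's Theorem to discard the exact-$\Delta_i$-derivative contributions. You are in fact slightly more careful than the paper, which asserts the vanishing of the Green's-Theorem terms without explicitly noting that this relies on the boundary behaviour of $p$ along $\Gamma$.
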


\begin{proof}
Note that
\begin{equation*}
\begin{split}
 \int \int_\Omega q \cdot T^{k}(p^{\sigma}) \Delta_1x \Delta_2 y&=\int \int_\Omega \left[ q a_0^k(x,y)p^\sigma(x,y)\right.
 + q a_1^k(x,y)\frac{\partial}{\Delta_1 x}p(x,\sigma_2(y)) \\
 & \left.  \quad \quad \quad +
 q a_2^k(x,y)\frac{\partial}{\Delta_2 y}p(\sigma_1(x),y)\right] \Delta_1 x \Delta_2 y
\end{split}
\end{equation*}

and
\begin{equation*}
\begin{array}{l}
\displaystyle \int \int_\Omega \left[ q a_1^k(x,y)\frac{\partial}{\Delta_1 x}p(x,\sigma_2(y)) + q a_2^k(x,y)\frac{\partial}{\Delta_2 y}p(\sigma_1(x),y)\right] \Delta_1 x \Delta_2 y\\
 = \displaystyle \int \int_\Omega  \left[\frac{\partial}{\Delta_1 x}\left(q a_1^k(x,y) p(x,\sigma_2(y)) \right)+
\frac{\partial}{\Delta_2 y}\left(q a_2^k(x,y) p(\sigma_1(x),y) \right)\right] \Delta_1 x \Delta_2 y \\
- \displaystyle \int \int_\Omega \left[\frac{\partial}{\Delta_1 x}(q a_1^k(x,y))\cdot p(\sigma_1(x),\sigma_2(y)) +
\frac{\partial}{\Delta_2 y}(q a_2^k(x,y))\cdot p(\sigma_1(x),\sigma_2(y))\right] \Delta_1 x \Delta_2 y.
\end{array}
\end{equation*}

Using  Green's Theorem we can conclude that
$$
\int \int_\Omega  \left[\frac{\partial}{\Delta_1 x}\left(q a_1^k(x,y) p(x,\sigma_2(y)) \right)+
\frac{\partial}{\Delta_2 y}\left(q a_2^k(x,y) p(\sigma_1(x),y) \right)\right] \Delta_1 x \Delta_2 y=0.$$

Hence
$$
\int \int_\Omega q \cdot T^{k}(p^{\sigma}) \Delta_1 x\Delta_2y
=
 \int \int_\Omega \left[ q a_0^k \cdot p^\sigma  - \frac{\partial}{\Delta_1 x}(q a_1^k )\cdot p^\sigma -
\frac{\partial}{\Delta_2 y}(q a_2^k )\cdot p^\sigma\right] \Delta_1x  \Delta_2y
$$
proving the desired result.

\end{proof}

\begin{remark}

Lemma \ref {adjointoperator} shows that we can define an adjoint operator of $T^k$, $\widetilde{T}^k$, by $$\widetilde{T}^{k}(q)= q a_0^{k} - \frac{\partial}{\Delta_1 x}(q a_1^{k}) - \frac{\partial}{\Delta_2 y} (q a_2^{k}).$$
\end{remark}

We are now ready to state and prove the Noether second theorem without transformation of time for multiple integral problems on time scales.

\begin{theorem} [Noether's second theorem without transforming time] \label{Noether's second theorem without transforming time double integral}
If functional $\mathcal{L}$ is invariant under transformations (\ref{without transformations of time double integral}), then,
$$ \sum_{k=1}^{n} \widetilde{T}^{k} (\widehat{E}_{k}(L)) \equiv 0  \quad \mbox{on} \quad \Omega^{\circ}
$$
where $\widehat{E}_{k}(L)$ are the $n$ Euler--Lagrange expressions and $\widetilde{T}^{k}$ is the adjoint operator of $T^k$.
\end{theorem}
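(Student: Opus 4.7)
The plan is to combine the two preceding lemmas with the fundamental lemma of the double variational calculus. By Lemma \ref{Invariant double integral}, invariance of $\mathcal{L}$ under the transformations (\ref{without transformations of time double integral}) yields
\begin{equation*}
\sum_{k=1}^{n} \int \int_\Omega \widehat{E}_{k}(L) \cdot T^{k}(p^{\sigma}) \, \Delta_1 x\,\Delta_2 y=0,
\end{equation*}
valid for every sufficiently smooth $p$ satisfying the boundary vanishing conditions imposed in the proof of Lemma \ref{Invariant double integral}. Applying Lemma \ref{adjointoperator} term by term with $q = \widehat{E}_k(L)$, I would rewrite the left-hand side in terms of the adjoint operator, obtaining
\begin{equation*}
\sum_{k=1}^{n} \int \int_\Omega \widetilde{T}^{k}\!\left(\widehat{E}_{k}(L)\right)\cdot p^{\sigma} \, \Delta_1 x\,\Delta_2 y=0.
\end{equation*}

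Next I would invoke the arbitrariness of $p$. The quantity inside the double integral is the product of a fixed continuous function $M := \sum_{k=1}^{n} \widetilde{T}^{k}(\widehat{E}_{k}(L))$ with $p^{\sigma}(x,y) = p(\sigma_1(x),\sigma_2(y))$. Since $p$ may be chosen freely subject only to the boundary vanishing requirements needed for Lemma \ref{Invariant double integral}, the function $p$ itself is an arbitrary admissible variation for problem (\ref{problem2}). Thus Lemma \ref{Fundamental lemma of the double variational calculus} applies and forces
\begin{equation*}
\sum_{k=1}^{n} \widetilde{T}^{k}\!\left(\widehat{E}_{k}(L)\right)(x,y)=0 \quad \text{for all } (x,y) \in \Omega^{\circ},
\end{equation*}
which is precisely the claimed identity.

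The only delicate step I anticipate is verifying that the boundary conditions imposed on $p$ in the course of proving Lemma \ref{Invariant double integral} still leave enough freedom to invoke Lemma \ref{Fundamental lemma of the double variational calculus}; in other words, one must check that the class of test functions $p$ for which both lemmas simultaneously apply is rich enough to conclude pointwise vanishing of $M$ on $\Omega^{\circ}$. This is a standard density argument: the admissible variations $\eta$ with $\eta \equiv 0$ on $\Gamma$ contain in particular all those coming from $p$'s with the prescribed higher-order vanishing data on $\Gamma$, and the latter class is already sufficient for the fundamental lemma. No further computation is required, and the theorem follows.
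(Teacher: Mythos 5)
Your proposal is correct and follows essentially the same route as the paper: Lemma \ref{Invariant double integral}, then Lemma \ref{adjointoperator} with $q=\widehat{E}_k(L)$, then the fundamental lemma of the double variational calculus. Your closing remark about checking that the test functions $p$ restricted by the boundary vanishing conditions are still rich enough for the fundamental lemma is a legitimate subtlety that the paper's own proof passes over in silence.
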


\begin{proof}
Using Lemma \ref{Invariant double integral} and Lemma \ref{adjointoperator} we conclude that if  $\mathcal{L}$ is invariant under transformations (\ref{without transformations of time double integral}), then
$$
\sum_{k=1}^{n} \int \int_\Omega \widehat{E}_{k}(L) \cdot T^k(p^{\sigma}) \Delta_1 x\Delta_2y =\sum_{k=1}^{n} \int \int_\Omega \widetilde{T}^k (\widehat{E}_{k}(L)) \cdot p^{\sigma} \Delta_1 x\Delta_2 y=0,$$
where  $\widetilde{T}^k$ is the adjoint operator of $T^k$.
Applying the fundamental lemma of the double variational calculus (Lemma \ref{Fundamental lemma of the double variational calculus})
we get  $$ \sum_{k=1}^{n} \widetilde{T}^{k} (\widehat{E}_{k}(L))\equiv 0  \quad \mbox{on} \quad \Omega^{\circ}$$
proving the desired result.
\end{proof}

\begin{corollary}[Classical  Noether's second theorem for double integrals problems, cf. \cite{Noether1918}]
Let $\Omega \subseteq \mathbb{R}^2$ be an $\omega$-type set and let $\Gamma$ be its positive fence.
Let $L(x,y,u,p,q)$ be a function of class $C^2$, $(x,y) \in \Omega \cup \Gamma$, $u=(u_1,u_2, \ldots, u_n)$.
If functional $\mathcal{L}$ defined by
$$
\mathcal{L}[y]=\int\int_{\Omega} L(x,y, u(x,y), \frac{\partial u}{\partial x}(x,y), \frac{\partial u}{\partial y}(x,y))\, dx dy
$$
is invariant under transformations (\ref{without transformations of time double integral}) (where $\rho_1$ and $\rho_2$ denote in this context the identity function, and $\Delta_1$ and $\Delta_2$ denote the usual derivative),
then
$$ \sum_{k=1}^{n} \widetilde{T}^{k} (\widehat{E}_{k}(L)) \equiv 0  \quad \mbox{on} \quad \Omega
$$
where
$$
\widehat{E}_k(L):=
\frac{\partial L}{\partial u_k}
- \frac{\partial}{\partial x} \frac{\partial L}{\partial p_{k}}
- \frac{\partial}{\partial y} \frac{\partial L}{\partial q_{k}}, \quad k=1,2,\ldots,n
$$
and $\widetilde{T}^{k}$ is the adjoint operator of $T^k$.
\end{corollary}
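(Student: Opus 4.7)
The plan is to derive this corollary as a direct specialization of Theorem~\ref{Noether's second theorem without transforming time double integral} to the time scales $\mathbb{T}_1=\mathbb{T}_2=\mathbb{R}$. First I would verify that the hypotheses of the general theorem are satisfied in this setting: for $\mathbb{T}=\mathbb{R}$, both the forward and backward jump operators coincide with the identity, so property~(\ref{property1}) holds trivially ($\sigma_1\circ\rho_1=\mathrm{id}$ and $\sigma_2\circ\rho_2=\mathrm{id}$); moreover, the graininess functions vanish, which means the ``continuous'' regularity assumption on $L$ ($C^2$) automatically implies the required continuity together with first- and second-order partial delta derivatives in $(x,y)$ and partial (usual) derivatives in $(u,p,q)$.

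Next I would translate the notation. For $\mathbb{T}_1=\mathbb{T}_2=\mathbb{R}$, we have $\sigma_1(x)=x$, $\sigma_2(y)=y$, so $u^\sigma_k=u_k$, $u^{\Delta_1}_k=\partial u_k/\partial x$, $u^{\Delta_2}_k=\partial u_k/\partial y$. Consequently the Euler--Lagrange expressions become
\[
\widehat{E}_k(L)=\frac{\partial L}{\partial u_k}-\frac{\partial}{\partial x}\frac{\partial L}{\partial p_k}-\frac{\partial}{\partial y}\frac{\partial L}{\partial q_k},
\]
which matches the expression stated in the corollary. Likewise, the operator $T^k$ defined in~(\ref{without transformations of time double integral}) simplifies to
\[
T^k(p)(x,y)=a_0^k(x,y)\,p(x,y)+a_1^k(x,y)\,\frac{\partial p}{\partial x}(x,y)+a_2^k(x,y)\,\frac{\partial p}{\partial y}(x,y),
\]
and its adjoint, as read off from Lemma~\ref{adjointoperator}, becomes
\[
\widetilde{T}^k(q)=q\,a_0^k-\frac{\partial}{\partial x}(q\,a_1^k)-\frac{\partial}{\partial y}(q\,a_2^k).
\]

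Finally, since in the continuous case $\Omega^\circ=\Omega$ (every interior point $(x,y)$ automatically satisfies $(\sigma_1(x),\sigma_2(y))\in\Omega$ because the jump operators are the identity), the conclusion of Theorem~\ref{Noether's second theorem without transforming time double integral} reads precisely
\[
\sum_{k=1}^{n}\widetilde{T}^k\bigl(\widehat{E}_k(L)\bigr)\equiv 0 \quad\text{on }\Omega,
\]
which is the desired identity. Since all steps are routine verifications of how the delta-calculus objects reduce to their classical counterparts under $\mathbb{T}=\mathbb{R}$, there is no real obstacle; the only point worth a brief sanity check is that the boundary vanishing conditions on $p$ imposed in the proof of Lemma~\ref{Invariant double integral} are compatible with the classical requirement that $p$ and its first partials vanish on $\Gamma$, which is clear because in the continuous setting those conditions collapse to $p|_\Gamma=0$, $\partial p/\partial x|_\Gamma=0$, $\partial p/\partial y|_\Gamma=0$.
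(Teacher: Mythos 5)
Your proposal is correct and follows exactly the route the paper intends: the corollary is stated as an immediate specialization of Theorem~\ref{Noether's second theorem without transforming time double integral} to $\mathbb{T}_1=\mathbb{T}_2=\mathbb{R}$, and your verification that $\sigma_i=\rho_i=\mathrm{id}$, $\Omega^{\circ}=\Omega$, and that the delta objects reduce to their classical counterparts is precisely the routine check required. Nothing is missing.
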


Similarly to the single delta integral case choosing $\mathbb{T}=h \mathbb{Z}$ (for some $h>0$) we obtain from Theorem \ref{Noether's second theorem without transforming time double integral}
the second Noether theorem for the double variational $h$-calculus; whereas choosing
$\mathbb{T}=q^{\mathbb{N}_0}$ (for some $q>1$) we obtain the second Noether theorem for the double variational $q$-calculus.

\section{Example}
\label{Example}

In order to illustrate the second Noether Theorem for the multiple integral case we will present the following example.
Let $\mathbb{T}_0$, $\mathbb{T}_1$, $\mathbb{T}_2$ and $\mathbb{T}_3$ be time scales and let $\Omega \subseteq \mathbb{T}_0 \times \mathbb{T}_1 \times \mathbb{T}_2\times \mathbb{T}_3$ be an $\omega$-type set.
 For $i=0,1,2,3$, denote by $\sigma_i$, $\rho_i$ and $\Delta_i$ the
forward jump operator, the backward jump operator and the delta derivative on $\mathbb{T}_i$, respectively.

Let $t:=(t_0,t_1,t_2,t_3) \in \Omega$  and consider the following real functions defined on $\Omega$: $A_0, A_1, A_2, A_3$.
Let $\mathbf{A}:=(A_1,A_2,A_3)$ and denote

\begin{equation*}
\begin{split}
\nabla A_0 (t)
:=&\left( \frac{\partial A_0}{\Delta_1 t_1} (\sigma_0(t_0), t_1, \sigma_2(t_2), \sigma_3(t_3) )\right.,\\
&\left.\frac{\partial A_0}{\Delta_2 t_2} (\sigma_0(t_0), \sigma_1(t_1), t_2, \sigma_3(t_3) ),
\frac{\partial A_0}{\Delta_3 t_3} (\sigma_0(t_0), \sigma_1(t_1), \sigma_2(t_2), t_3 )\right)
\end{split}
\end{equation*}

\begin{equation*}
\begin{split}
\frac{\partial \mathbf{A}}{\Delta_0 t_0}(t)
:=&
\left( \frac{\partial A_1}{\Delta_0 t_0} (t_0, \sigma_1(t_1), \sigma_2(t_2), \sigma_3(t_3) )\right.,\\
&\left.\frac{\partial A_2}{\Delta_0 t_0} (t_0, \sigma_1(t_1), \sigma_2(t_2), \sigma_3(t_3) ),
\frac{\partial A_3}{\Delta_0 t_0}(t_0, \sigma_1(t_1), \sigma_2(t_2), \sigma_3(t_3) )\right)
\end{split}
\end{equation*}

and

\begin{equation*}
\begin{split}
curl \mathbf{A}(t)
:= &\left( \frac{\partial A_3}{\Delta_2 t_2} (\sigma_0(t_0), \sigma_1(t_1), t_2, \sigma_3(t_3))
- \frac{\partial A_2}{\Delta_3 t_3} (\sigma_0(t_0),  \sigma_1(t_1), \sigma_2(t_2),t_3) , \right.\\
&
\frac{\partial A_1}{\Delta_3 t_3} (\sigma_0(t_0), \sigma_1(t_1), \sigma_2(t_2), t_3) -
\frac{\partial A_3}{\Delta_1 t_1} (\sigma_0(t_0), t_1, \sigma_2(t_2),\sigma_3(t_3)),\\
&
\left.\frac{\partial A_2}{\Delta_1 t_1} (\sigma_0(t_0), t_1, \sigma_2(t_2),\sigma_3(t_3))-
\frac{\partial A_1}{\Delta_2 t_2} (\sigma_0(t_0), \sigma_1(t_1), t_2, \sigma_3(t_3))
\right).
\end{split}
\end{equation*}

We will consider the following Lagrangian function

$$
L= \frac{1}{2} \left\|\nabla A_0 - \frac{\partial \mathbf{A}}{\Delta_0 t_0} \right\|^2    -   \frac{1}{2}\left \|curl \mathbf{A}\right\|^2
$$

that is the time scale version of the Lagrangian density for the electromagnetic field (see, for example, \cite{LoganBook1977}).

It can be proved that the functional

$$
\mathcal{L}=\int \cdots \int_\Omega L \, \Delta_0 \Delta_1 \Delta_2 \Delta_3
$$

is invariant under the gauge transformations

$$
\overline{A}_k=A_k+\frac{\partial}{\Delta_k t_k} p^{\rho_k} , \quad k=0,1,2,3
$$

where $p:\Omega \rightarrow R$ is an arbitrary continuous function  that has continuous partial delta derivatives of the first and second order
(hence, we have equality of mixed partial delta derivatives, see \cite{Bohner+Guseinov2004}).

Since, for each $k=0,1,2,3$, $$T^k(p)= \frac{\partial}{\Delta_k t_k} p^{\rho_k}$$  then, by Lemma \ref{adjointoperator}, we conclude that

$$\widetilde{T}^{k}(q)= - \frac{\partial}{\Delta_k t_k} q.$$

Hence, from the second Noether theorem (Theorem \ref{Noether's second theorem without transforming time double integral}), we get

$$
\sum_{k=0}^{3} \frac{\partial}{\Delta_k t_k} \widehat{E}_k(L)\equiv 0  \quad \mbox{on} \quad \Omega^{\circ},$$

where $\widehat{E}_{k}(L)$, $k=0,1,2,3$, are the Euler--Lagrange expressions associated to functional $\mathcal{L}$.

If we suppose that, for each $k=0,1,2,3$, $A_k$ has continuous partial delta derivatives of the first and second order
and that
$A_0$ and the vector field $\mathbf{A}$ satisfy the so called Lorentz conditions on time scales:

$$ div \mathbf{A}|_{(t_0, \sigma_1(t_1), \sigma_2(t_2), \sigma_3(t_3))} = \frac{\partial A_0}{\Delta_0 t_0}|_{(t_0, \sigma_1(t_1), \sigma_2(t_2), \sigma_3(t_3))}$$

$$ div \mathbf{A}|_{(\sigma_0(t_0), t_1, \sigma_2(t_2), \sigma_3(t_3))} = \frac{\partial A_0}{\Delta_0 t_0}|_{(\sigma_0(t_0), t_1, \sigma_2(t_2), \sigma_3(t_3))}$$

$$ div \mathbf{A}|_{(\sigma_0(t_0), \sigma_1(t_1), t_2, \sigma_3(t_3))} = \frac{\partial A_0}{\Delta_0 t_0}|_{(\sigma_0(t_0), \sigma_1(t_1), t_2, \sigma_3(t_3))}$$

$$ div \mathbf{A}|_{(\sigma_0(t_0), \sigma_1(t_1), \sigma_2(t_2), t_3)} = \frac{\partial A_0}{\Delta_0 t_0}|_{(\sigma_0(t_0), \sigma_1(t_1), \sigma_2(t_2), t_3)}$$

where $div \mathbf{A}$ denotes the divergence of a vector field $\mathbf{A}$, that is, $$div \mathbf{A} := \frac{\partial A_1}{\Delta_1 t_1}+ \frac{\partial A_2}{\Delta_2 t_2}+ \frac{\partial A_3}{\Delta_3 t_3},$$ then
the Euler--Lagrange expressions can be written in the following way:

$$
\widehat{E}_k(L)=   \frac{\partial^2 A_k}{\Delta_0 t_0^2} (t_0,\sigma_1(t_1),\sigma_2(t_2), \sigma_3(t_3))  - \nabla^2 A_k(t_0,t_1,t_2,t_3), \quad \quad k=0,1,2,3
$$

where

\begin{equation*}
\begin{split}
\nabla^2 A_k(t):= &\frac{\partial^2 A_k}{\Delta_1 t^2_1}(\sigma_0(t_0), t_1, \sigma_2(t_2), \sigma_3(t_3)) +
\frac{\partial^2 A_k}{\Delta_2 t^2_2}(\sigma_0(t_0), \sigma_1(t_1), t_2, \sigma_3(t_3))\\
& + \frac{\partial^2 A_k}{\Delta_3 t^2_3}(\sigma_0(t_0), \sigma_1(t_1), \sigma_2(t_2),t_3).
\end{split}
\end{equation*}

Hence, under these assumptions, we can conclude that

$$\sum_{k=0}^{3} \frac{\partial}{\Delta_k t_k} \left(\frac{\partial^2 A_k}{\Delta_0 t_0^2} (t_0,\sigma_1(t_1),\sigma_2(t_2), \sigma_3(t_3))  - \nabla^2 A_k(t_0,t_1,t_2,t_3)\right)= 0  \quad \mbox{on} \quad \Omega^{\circ}.$$


\section{Concluding remarks}
\label{Concluding Remarks}

We proved that the important Noether's second theorem is valid not only for the continuous and discrete calculus,
but also for the quantum calculus.
Moreover, in our opinion,
the proofs presented in this paper are elegant and clear to follow.

The question of obtaining  Noether's second theorem for multiple integrals with transformation of time in the time scale setting
remains an interesting open question. To the best authors' knowledge, to extend the second Noether theorem to multiple integrals with transformation of time,
substitution in the multiple integral is a fundamental tool and this result is not yet available in the literature.

For other generalizations of the second Noether theorem we refer the reader to \cite{Malinowska2Nother} (in the context of the fractional calculus of variations) and \cite{Torres-2003} (in the context of the optimal control).

\section*{Acknowledgements}

Work supported by {\it FEDER} funds through {\it COMPETE}--Operational Programme Factors of Competitiveness
(``Programa Operacional Factores de Competitividade'') and by Portuguese funds through the
{\it Center for Research and Development in Mathematics and Applications} (University of Aveiro)
and the Portuguese Foundation for Science and Technology (``FCT--Funda\c{c}\~{a}o para a Ci\^{e}ncia e a Tecnologia''),
within project PEst-C/MAT/UI4106/2011 with COMPETE number FCOMP-01-0124-FEDER-022690. A. B. Malinowska was also supported by Bialystok
University of Technology grant S/WI/02/2011.



\end{document}